\title{Counting Egyptian fractions}
\keywords{Egyptian fractions}
\subjclass[2010]{Primary: 11D68, Secondary: 11B99}
\author{Sandro Bettin}
\address{Dipartimento di Matematica, Universit\`{a} di Genova, Via Dodecaneso 35, 16146 Genova, Italy}
\email{bettin@dima.unige.it}
\author{Lo\"{\i}c Greni\'{e}}
\address{Dipartimento di Ingegneria Gestionale, dell'Informazione e della Produzione, Universit\`{a} di Bergamo, viale Marconi 5, 24044 Dalmine, Italy}
\email{loic.grenie@gmail.com}
\author{Giuseppe Molteni}
\address{Dipartimento di Matematica, Universit\`{a} di Milano, Via Saldini 50, 20133 Milano, Italy}
\email{giuseppe.molteni1@unimi.it}
\author{Carlo Sanna}
\address{Dipartimento di Matematica, Universit\`{a} di Genova, Via Dodecaneso 35, 16146 Genova, Italy}
\email{carlo.sanna.dev@gmail.com}
\newtheorem{thm}{Theorem}[section]
\newtheorem{lem}[thm]{Lemma}
\theoremstyle{remark}
\newtheorem{rmk}{Remark}[section]
\newcommand{\N}{\mathbb{N}}
\newcommand{\Z}{\mathbb{Z}}
\newcommand{\R}{\mathbb{R}}
\newcommand{\m}{\mathfrak{m}}
\renewcommand{\SS}{\mathfrak{S}}
\newcommand{\EE}{\mathfrak{E}}
\newcommand{\af}{\mathfrak{a}}
\newcommand{\eps}{\varepsilon}
\newcommand{\lcm}{\operatorname{lcm}}
\renewcommand{\emptyset}{\varnothing}
\newcolumntype{L}{>{$}l<{$}}
\newcolumntype{R}{>{$}r<{$}}
\newcolumntype{C}{>{$}c<{$}}
\begin{document}

\begin{center}
\emph{Important:} It has been pointed out to the authors that the bounds on $\#\EE_N$ given in this paper were already proved in~\cite{MR0366795} (the upper bound in a stronger form).\\
We plan to update further this article at a later stage.
\end{center}
\vspace{2em}

\begin{abstract}
For any integer $N \geq 1$, let $\EE_N$ be the set of all Egyptian fractions employing denominators less
than or equal to $N$. We give upper and lower bounds for the cardinality of $\EE_N$, proving that
\begin{equation*}
\frac{N}{\log N} \prod_{j = 3}^{k} \log_j N<\log(\#\EE_N) < 0.421\, N,
\end{equation*}
for any fixed integer $k\geq3$ and every sufficiently large $N$, where $\log_j x$ denotes the $j$-th
iterated logarithm of $x$.
\end{abstract}

\maketitle

\section{Introduction}

Every positive rational number $a/b$ can be written in the form of an \emph{Egyptian fraction}, that is,
as a sum of distinct unit fractions: $a/b=1/n_1+\cdots+1/n_r$ with $n_1,\dots,n_r\in\N$ distinct. %
Several properties of these representations have been investigated. For example, it is known that all
rationals $a/b\in(0,1)$ are representable using only denominators which are
$O(b (\log b)^{1+\eps})$~\cite{MR852193,MR961916} (see also~\cite{MR1057319}) or that $O(\sqrt{\log b})$
different denominators are always sufficient~\cite{MR766441}. It is also well understood which integers
can be represented using denominators up to a bound $x$~\cite{MR1832627}, and
Martin~\cite{MR1608486,MR1793163} showed that any rational can be represented as a ``dense Egyptian
fraction''.
In this paper we take a different direction, and study the cardinality of the set of rational numbers
representable using denominators up to $N$,
\begin{equation*}
\EE_N := \bigg\{\sum_{n = 1}^N \frac{t_n}{n}\colon t_1, \dots, t_N \in \{0,1\} \bigg\},\qquad N\in\N,
\end{equation*}
as $N\to+\infty$.

Another motivation for studying the cardinality of $\EE_N$ comes from the recent work of three of the
authors~\cite{MR3907571} (see also~\cite{PreprintGreedy}), where the question of how well a real number
$\tau$ can be approximated by sums of the form $\sum_{n = 1}^N s_n / n$, where $s_1, \dots, s_N \in \{-1,
+1\}$, is studied. Precisely, let
\begin{equation*}
\SS_N := \left\{\sum_{n = 1}^N \frac{s_n}{n}\colon s_1, \dots, s_N \in \{-1,+1\} \right\}
\quad\text{and}\quad
\m_N(\tau) := \min\left\{|\tau - \sigma|\colon \sigma \in \SS_N \right\} ,
\end{equation*}
for every positive integer $N$. Note that $\SS_N$ and $\EE_N$ have the same cardinality, since $x \mapsto
2x - \sum_{n=1}^N 1/n$ is a bijection $\EE_N \to \SS_N$.

It has been proved that $\m_N(\tau) < \exp\big(-(\tfrac1{\log 4} - \eps)(\log N)^2\big)$
for every $\tau \in \R$, $\eps > 0$, and for all sufficiently large
positive integers $N$, depending on $\tau$ and $\eps$~\cite[Theorem~1.1]{MR3907571}; and that for any $f\colon \N \to \R^+$, there exists $\tau_f \in \R$ such that $\m_N(\tau_f) <
f(N)$ for infinitely many $N$~\cite[Proposition~5.9]{PreprintGreedy}. On the other hand, it is possible to obtain lower bounds for $\m_N(\tau)$
holding for almost all $\tau$ by giving upper bounds for the cardinality of $\EE_N$. Indeed, defining
\begin{equation*}
\alpha := \limsup_{N \to +\infty} \frac{\log(\#\EE_N)}{N} ,
\end{equation*}
by a Borel-Cantelli argument one obtains the following lower bound.

\begin{lem}\label{lem:almostevery}
Fix $\eps > 0$.
For almost all $\tau\in\R$ we have
\begin{equation*}
\m_N(\tau) > \exp\!\left(-(\alpha + \eps) N\right)
\end{equation*}
for all sufficiently large $N$, depending on $\tau$ and $\eps$.
\end{lem}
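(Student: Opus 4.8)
The plan is a routine Borel--Cantelli argument. First I would localize: since a countable union of Lebesgue-null sets is null and $\R = \bigcup_{M \geq 1} [-M, M]$, it suffices to fix an integer $M \geq 1$ and show that the set of $\tau \in [-M, M]$ for which $\m_N(\tau) \leq \exp(-(\alpha + \eps) N)$ holds for infinitely many $N$ has measure zero. Working inside a bounded interval is what makes Lebesgue measure finite and lets Borel--Cantelli apply.

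Next, for each $N$ I would bound the measure of the bad set $B_N := \{\tau \in [-M, M] : \m_N(\tau) \leq \exp(-(\alpha + \eps) N)\}$. By definition of $\m_N(\tau)$, every $\tau \in B_N$ lies within distance $\exp(-(\alpha + \eps) N)$ of some element of $\SS_N$, so $B_N$ is contained in the union of $\#\SS_N$ intervals each of length $2\exp(-(\alpha + \eps) N)$. Hence, using $\#\SS_N = \#\EE_N$ (noted in the introduction), the Lebesgue measure satisfies $|B_N| \leq 2 \, \#\EE_N \, \exp(-(\alpha + \eps) N)$.

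Then I would invoke the definition of $\alpha$ as a $\limsup$: there is $N_0 = N_0(\eps)$ such that $\#\EE_N < \exp((\alpha + \tfrac{\eps}{2}) N)$ for all $N \geq N_0$, whence $|B_N| \leq 2 \exp(-\tfrac{\eps}{2} N)$ for all $N \geq N_0$. Therefore $\sum_{N} |B_N| < \infty$, and by the Borel--Cantelli lemma almost every $\tau \in [-M, M]$ belongs to only finitely many of the sets $B_N$; for every such $\tau$ we have $\m_N(\tau) > \exp(-(\alpha + \eps) N)$ for all sufficiently large $N$ (depending on $\tau$ and $\eps$). Letting $M$ run over the positive integers and taking the (countable) union of the exceptional sets completes the proof.

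The argument has no real obstacle — it is entirely standard. The only two points that need a moment's attention are the reduction to a bounded interval, so that the measures $|B_N|$ are finite, and the choice of the slack $\eps/2$: one needs the $\limsup$ bound on $\#\EE_N$ to grow strictly slower than $\exp((\alpha+\eps)N)$ so that the measures $|B_N|$ are summable.
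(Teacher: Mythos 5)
Your proof is correct and follows essentially the same Borel--Cantelli argument as the paper, including the same $\eps/2$ slack and the same covering of the bad set by $\#\SS_N$ intervals of length $2\exp(-(\alpha+\eps)N)$. The only (harmless but unnecessary) difference is the localization to $[-M,M]$: each bad set over all of $\R$ is already a finite union of intervals and hence has finite measure, which is all that Borel--Cantelli requires --- the ambient measure space need not be finite --- so the paper simply estimates $\operatorname{meas}\bigl(\bigcap_M \bigcup_{N\geq M} B_N\bigr)$ directly without restricting $\tau$ to a bounded range.
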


In~\cite[Proposition~2.7]{MR3907571} we proved that $\alpha < 0.6649$ with a simple argument, improving
upon the trivial $\alpha\leq \log 2=0.69314\dots$
In this paper, we improve this bound even further,
thus producing a better lower bound for $\m_N(\tau)$.

\begin{thm}\label{thm:upper}
We have $\alpha < 0.421$, that is, $\log(\#\EE_N) < 0.421 \, N$ for all $N$ large enough.
\end{thm}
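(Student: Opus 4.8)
\emph{Sketch of the proof.} The strategy is to exploit the multiplicative structure of the denominators. For $D\subseteq\{1,\dots,N\}$ put $\EE(D):=\{\sum_{n\in D}t_n/n\colon t_n\in\{0,1\}\}$, so $\EE_N=\EE(\{1,\dots,N\})$; if $\{1,\dots,N\}$ is partitioned as $D_1\sqcup\cdots\sqcup D_r$, then each $x=\sum_nt_n/n$ decomposes as $x=x_1+\cdots+x_r$ with $x_i:=\sum_{n\in D_i}t_n/n\in\EE(D_i)$, and $x$ is determined by the tuple $(x_1,\dots,x_r)$, so $\#\EE_N\le\prod_{i=1}^r\#\EE(D_i)$. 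The two tools for estimating the factors are the trivial bound $\#\EE(D)\le 2^{\#D}$ and the common-denominator bound $\#\EE(D)\le 1+\lcm(D)\sum_{n\in D}1/n$, the latter being efficient exactly when $D$ consists of smooth numbers (small $\lcm$, many elements).

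I would partition $\{1,\dots,N\}$ according to the largest prime factor $P(n)$ of $n$: let $B:=\{n\le N\colon P(n)\le\sqrt N\}$ be the set of $\sqrt N$-smooth numbers (together with $n=1$), and $A:=\{1,\dots,N\}\setminus B$. Every $n\in A$ has a \emph{unique} prime factor exceeding $\sqrt N$ (two of them would multiply to more than $N$), namely $P(n)$; hence $A=\bigsqcup_{\sqrt N<p\le N}A_p$ with $A_p=\{pm\colon 1\le m\le\lfloor N/p\rfloor\}$, and $\EE(A_p)=\tfrac1p\,\EE_{\lfloor N/p\rfloor}$, so $\#\EE(A_p)=\#\EE_{\lfloor N/p\rfloor}$. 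For $B$ we use the common-denominator bound: every denominator in $B$ divides $L:=\prod_{q\le\sqrt N}q^{\lfloor\log_qN\rfloor}\le N^{\pi(\sqrt N)}=\exp\!\big(O(\sqrt N)\big)$, and $\sum_{n\in B}1/n=O(\log N)$, so $\log(\#\EE(B))=O(\sqrt N)=o(N)$. Putting this together gives the recursion
\begin{equation*}
\log(\#\EE_N)\;\le\;\sum_{\sqrt N<p\le N}\log(\#\EE_{\lfloor N/p\rfloor})\;+\;O(\sqrt N).
\end{equation*}

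This recursion essentially solves itself. By Mertens' theorem $\sum_{\sqrt N<p\le N}1/p=\log 2+O(1/\log N)$, so feeding the trivial estimate $\log(\#\EE_M)\le M\log 2$ into the displayed inequality already yields $\log(\#\EE_N)\le(\log 2)^2N+o(N)$; feeding \emph{that} estimate back in gives $(\log 2)^3N+o(N)$, and so on. Since $(\log 2)^3<0.34<0.421$, two iterations suffice for Theorem~\ref{thm:upper} (and, iterated indefinitely, the argument in fact gives $\alpha=0$). The only genuine work is the bookkeeping needed to iterate: one must keep the error term uniformly sublinear through the iteration — the primes $p$ with $\lfloor N/p\rfloor$ small have to be peeled off separately, since the inductive bound $\log(\#\EE_M)\le cM$ with $c<\log 2$ holds only for large $M$; splitting the prime range at $N^{1-\delta}$ with a slowly decaying $\delta=\delta(N)$ achieves this while leaving an $o(N)$ remainder — and one must phrase the induction cleanly, e.g.\ as ``$\limsup_N\log(\#\EE_N)/N\le c$ implies $\limsup_N\log(\#\EE_N)/N\le c\log 2$'', with base case $c=\log 2$. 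I expect this bookkeeping, rather than any individual estimate, to be the main obstacle; it is also worth noting that the threshold $\sqrt N$ is close to forced, since a larger threshold makes the smooth block $B$ too large to discard and a smaller one makes $\sum_p 1/p$ over the complementary range exceed $\log 2$.
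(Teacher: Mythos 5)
Your decomposition is correct and takes a genuinely different route from the paper. The paper's proof is essentially computational: its key tool is Lemma~\ref{lem:chains}, which bounds $\alpha$ in terms of a nested chain $\af_1\subset\cdots\subset\af_\ell$, the natural density $\delta$ of a set of ``good'' multipliers $D$ (so that the translates $k\af_i$, $k\in D$, tile a positive-density subset of $\{1,\dots,N\}$), and the cardinalities $r_i=\#R_i$; the numerical value $0.421$ then comes from taking $\af$ to be the divisors of $M=\lcm\{1,\dots,97\}$, computing $r_i$ exactly for small $i$, and using the parity bound of Lemma~\ref{lem:A} for large $i$ (a $\sim 40$-hour, $200$GB computation). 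Your argument replaces all of this with a single structural observation: split $\{1,\dots,N\}$ by largest prime factor at the threshold $\sqrt N$, use the common-denominator bound (the same idea as Lemma~\ref{lem:A}) on the smooth block $B$ — whose $\lcm$ is only $\exp(O(\sqrt N))$ — and notice that each block $A_p$ is a scaled copy of $\{1,\dots,\lfloor N/p\rfloor\}$, giving the self-referential bound
$$\log(\#\EE_N)\le\sum_{\sqrt N<p\le N}\log\big(\#\EE_{\lfloor N/p\rfloor}\big)+O(\sqrt N).$$
The ``bookkeeping'' you flag is real but is cleaner than you suggest: rather than a moving split point $N^{1-\delta(N)}$, simply fix $\eps>0$, take $M_0$ with $\log(\#\EE_M)\le(\alpha+\eps)M$ for $M\ge M_0$, and split the prime sum at $p=N/M_0$; the tail $N/M_0<p\le N$ contributes $O_{M_0}(N/\log N)=o(N)$ by Chebyshev, while the head gives $(\alpha+\eps)N(\log 2+o(1))$ by Mertens. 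Dividing by $N$, taking $\limsup$ and then $\eps\to 0$ yields the fixed-point inequality $\alpha\le\alpha\log 2$, which forces $\alpha=0$ in one stroke rather than by repeated iteration. So your method not only proves Theorem~\ref{thm:upper} but overshoots it considerably, giving $\log(\#\EE_N)=o(N)$ — this is essentially the Bleicher--Erd\H{o}s phenomenon alluded to in the notice at the top of the paper, and it makes the paper's computer-assisted bound $0.421$ obsolete. What the paper's approach buys in exchange is an explicit, finite, checkable certificate for a concrete constant without needing the recursion or Mertens; but as a route to the stated theorem yours is both simpler and stronger.
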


In the opposite direction, it is not difficult to show that $\log(\#\EE_N) \gg N/\log N$. In the
following theorem we show that one can slightly improve over this lower bound.

\begin{thm}\label{thm:lower}
For every integer $k\geq 3$ we have
\begin{equation}\label{equ:logEElower}
\log(\#\EE_N) > \frac{N}{\log N} \prod_{j = 3}^{k} \log_j N
\end{equation}
for all sufficiently large $N$, depending on $k$.
\end{thm}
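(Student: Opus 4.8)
The plan is to exhibit a large subset of $\EE_N$ whose elements are forced to be distinct by an arithmetic obstruction, and then count that subset. The natural candidates are the primes: if $p_1 < p_2 < \cdots$ are the primes in a suitable range and $S, S'$ are two distinct subsets of them, then $\sum_{p \in S} 1/p \neq \sum_{p \in S'} 1/p$, because clearing denominators and looking at the valuation at any prime $q$ lying in exactly one of $S, S'$ shows the two sums differ. Hence the number of subset-sums of the primes up to $N$ already gives $\#\EE_N \geq 2^{\pi(N)}$, i.e. $\log(\#\EE_N) \gg N/\log N$, the trivial bound mentioned in the excerpt. To beat this, I would enlarge the family: instead of unit fractions $1/p$ with $p$ prime, use unit fractions $1/m$ where $m$ ranges over integers built as a prime times a highly divisible ``smooth'' multiplier, so that many distinct $m \leq N$ share the property that a given large prime $q$ divides exactly the denominators one wants. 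The key point is that one wants to choose, for each large prime $q$ in a window, a whole \emph{block} of admissible denominators all divisible by $q$ (and by a common smooth part), so that turning blocks on and off independently, and within a block choosing which elements to include, multiplies the count.

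Concretely, here is the mechanism I expect to use. Fix parameters and let $Q$ be the set of primes $q$ in an interval, say $(y, 2y]$ with $y$ to be optimized. For each such $q$, consider denominators of the form $q \cdot a \leq N$ with $a$ running over a set $A$ of integers that are products of distinct small primes (a ``primorial-type'' smooth set), so that $\#\{a \in A : qa \leq N\} \approx \#\{a \in A : a \leq N/(2y)\}$. The unit fractions $\{1/(qa) : a \in A,\ qa \leq N\}$ for different $q \in Q$ involve disjoint sets of denominators, and more importantly, for a sum $\sum_{q \in Q}\sum_{a} t_{q,a}/(qa)$, the prime $q$ appears in the denominator of a term iff that term's coefficient is $1$; writing the total sum over a common denominator $L = \lcm$ of everything and computing the $q$-adic valuation of the numerator shows that the ``$q$-block'' of coefficients $(t_{q,a})_a$ is determined by the value of the sum (up to a bounded ambiguity coming from the fact that several $a$'s contribute to the same $q$). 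The cleanest version: because the $a \in A$ are coprime to $q$ and mutually have controlled gcd's, the partial sum $\sum_a t_{q,a}/(qa)$ is recoverable from the full sum, and then within that partial sum the $t_{q,a}$ are recoverable by a second valuation argument at the small primes, or simply by choosing $A$ so small (e.g. $A$ an antichain under divisibility with pairwise sums distinct) that subset-sums over $A$ are injective. Either way I get an injection from $\prod_{q \in Q} (\text{subsets of } A_q)$ into $\EE_N$ where $A_q = \{a \in A : qa \leq N\}$, giving
\begin{equation*}
\log(\#\EE_N) \;\geq\; \sum_{q \in Q} \log\bigl(\#\{\text{distinguishable subset-sums over } A_q\}\bigr) \;\gg\; \pi(2y) \cdot \log_2\bigl(\tfrac{N}{y}\bigr)\text{-type quantity},
\end{equation*}
and optimizing $y$ (roughly $y$ a small power of a logarithm, or iterated-logarithm, of $N$) will produce the iterated-logarithm factors $\prod_{j=3}^k \log_j N$ in \eqref{equ:logEElower}.

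The main obstacle, and where the real work lies, is the injectivity bookkeeping: one must guarantee that distinct choices of the double-indexed bit-string $(t_{q,a})$ really do yield distinct rationals, despite the denominators $qa$ overlapping in their small-prime parts across different $q$'s and despite multiple $a$'s sharing the factor $q$. I expect to handle this by: (i) reducing modulo the large primes $q$ one at a time to peel off each block $(t_{q,a})_a$, using that all other terms have denominators coprime to $q$ and that the number of $a$ with $qa \le N$ is small enough that $\sum_a t_{q,a}/a \bmod 1$ (with denominator coprime to $q$) is uniquely read off from the $q$-adic data; then (ii) within a fixed block, ensuring the subset-sum map over $A_q$ is injective — the safest route is to take $A$ itself to be a set of integers whose subset sums of reciprocals are all distinct (for instance reciprocals of a sufficiently lacunary set, or just take $|A_q|$ of size $\asymp \log_2(N/y)$ formed by a geometric-type progression), which still gives the right order. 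A secondary technical point is verifying, via Mertens-type estimates, that $\#A_q$ can be taken as large as $c\log_2(N/y)$ or better while keeping all $qa \leq N$; pushing this count and then choosing $y$ as a slowly growing function of $N$ (an iterated logarithm) is exactly what converts the gain from $N/\log N$ to $\frac{N}{\log N}\prod_{j=3}^k \log_j N$, and getting all $k-2$ nested logarithms rather than just one will require iterating the construction or choosing the smooth set $A$ with care.
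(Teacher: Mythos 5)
Your general plan — take denominators of the form (large prime $q$) $\times$ (smooth multiplier $a$), and peel off the $q$-block by a $q$-adic valuation argument — is in the right spirit, and in fact it is a transposed version of what the paper does. But there is a genuine gap in the concrete realization, and it sits exactly in the place you flag as ``where the real work lies.'' Your peeling step requires $\lcm(A)\sum_{a\in A}1/a < q_0$ for every prime $q_0$ you use, so that the congruence $\sum_a w_{q_0,a}\lcm(A)/a\equiv0\pmod{q_0}$ forces the numerator to vanish. Once a block is peeled off you need to recover the bits $(t_{q_0,a})_a$ from $\sum_a t_{q_0,a}/a$, and for that you propose taking $A$ lacunary so that the reciprocal subset-sums are automatically distinct. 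That choice is fatal to the bound: for a geometric-type $A$ the sum $\sum_{a\in A}1/a$ is $O(1)$, and since the total count of admissible pairs $(q,a)$ is $\sum_a\bigl(\pi(N/a)-\pi(y)\bigr)\asymp\frac{N}{\log N}\sum_{a\in A}\frac1a$, you only get $\log\#\EE_N\gg N/\log N$ — the trivial bound coming from primes alone, with no gain of $\prod_{j=3}^k\log_j N$. There is no choice of the window parameter $y$ that rescues this: larger $y$ helps neither $\sum 1/a$ nor $\#A_q$, and smaller $y$ tightens the peeling constraint $\lcm(A)\sum 1/a<y$.

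What is missing is a way to make $\sum_{a\in A}1/a$ grow while keeping the within-block map injective, and that is exactly the paper's key idea, which your sketch does not contain. The paper introduces the set $\mathcal{U}:=\{N\geq1: 1/N\neq\sum_{n<N}w_n/n\text{ for all }w_n\in\{-1,0,1\}\}$ and proves the ``doubling lemma'' $\#\EE_N=2\#\EE_{N-1}$ for $N\in\mathcal{U}$, which is equivalent to saying that distinct subsets of $\mathcal{U}$ have distinct reciprocal sums. It then shows, by the very $p$-adic peeling you describe, that if $m\in\mathcal{U}$ and $p>g_m:=\lcm\{1,\dots,m\}\sum_{j\leq m}1/j$ is prime, then $mp\in\mathcal{U}$. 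This multiplicative closure lets the paper bound $\#\mathcal{U}(x)$ recursively, gaining one iterated logarithm per step, and then $\log\#\EE_N\geq\log2\cdot\#\mathcal{U}(N)\gg_k\frac{N}{\log N}\prod_{j=3}^k\log_j N$. In your language this amounts to taking the within-block set $A$ to be $\mathcal{U}(m)$ with $g_m<y$ (so $m\asymp\log y$ and $y$ can be taken as large as a small power of $N$), for which $\sum_{a\in A}1/a$ does grow like $\prod_{j=2}^{k-1}\log_j m\asymp\prod_{j=3}^k\log_j N$ while reciprocal subset-sums remain distinct. So the recursion you relegate to a closing remark (``iterating the construction'') is not a refinement that can be postponed; identifying the set $\mathcal{U}$ and its recursive structure is the heart of the proof, and without it the construction delivers only $N/\log N$.
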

\noindent %
Theorem~\ref{thm:lower} is proved by showing that the set of integers $N$ for which the quotient
$\#\EE_N/\#\EE_{N-1}$ attains its maximum value $2$ is quite large. With some more effort it is possible
to give an explicit sequence of positive integers $(N_k)_{k \geq 1}$ such that \eqref{equ:logEElower}
holds for every $N \geq N_k$.

Upper and lower bounds in Theorems~\ref{thm:upper} and~\ref{thm:lower} have different orders and it is
not clear whether one of them seizes the true behavior of the sequence $\#\EE_N$. Only a few of these
numbers can be computed, since the algorithms devised for this purpose have an exponential behavior (in
time or in memory). The ones which are known are in Table~\ref{tab:1} and partially appear as
sequence A072207 of~\cite{OEIS}. A graph of $\log(\#\EE_N)/N$ and of $\log(\#\EE_N)/(N/\log N)$ is in
Figure~\ref{Fig:1}. %
As the range of $N$ for which $\#\EE_N$ is known is very small, the data do not make it clear whether
$\log(\#\EE_N)/N$ might converge to $0$ or to any other real number.
\begin{table}[H]
\begin{tabular}{RR|RR|RR|RR}
  \toprule
 N & \EE_N    &   N & \EE_N   &   N & \EE_N      &   N & \EE_N       \\
  \midrule
 1 &    2     &  12 &   1856  &  23 &    896512  &  34 &   224129024 \\
 2 &    4     &  13 &   3712  &  24 &    936832  &  35 &   231010304 \\
 3 &    8     &  14 &   7424  &  25 &   1873664  &  36 &   237031424 \\
 4 &   16     &  15 &   9664  &  26 &   3747328  &  37 &   474062848 \\
 5 &   32     &  16 &  19328  &  27 &   7494656  &  38 &   948125696 \\
 6 &   52     &  17 &  38656  &  28 &   7771136  &  39 &  1896251392 \\
 7 &  104     &  18 &  59264  &  29 &  15542272  &  40 &  1928593408 \\
 8 &  208     &  19 & 118528  &  30 &  15886336  &  41 &  3857186816 \\
 9 &  416     &  20 & 126976  &  31 &  31772672  &  42 &  3925999616 \\
10 &  832     &  21 & 224128  &  32 &  63545344  &  43 &  7851999232 \\
11 & 1664     &  22 & 448256  &  33 & 112064512  &     &             \\
  \bottomrule
\end{tabular}
\caption{The first values of $\#\EE_N$.}
\label{tab:1}
\end{table}

\begin{figure}[H]
\centering
\input{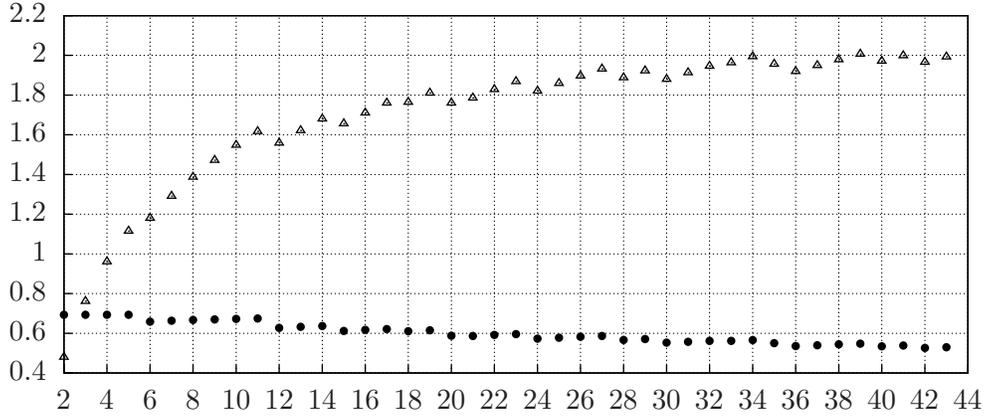}
\caption{Graph of $\log(\#\EE_N)/N$ (dots) and of $\log(\#\EE_N)/(N/\log N)$ (triangles) for $N \leq 43$.}
\label{Fig:1}
\end{figure}

\subsection*{Acknowledgements}
S.~Bettin is member of the INdAM group GNAMPA.
L.~Greni\'{e}, G.~Mol\-te\-ni and C.~Sanna are members of the INdAM group GNSAGA.
C.~Sanna is supported by a postdoctoral fellowship of INdAM.
The extensive computations needed for this paper have been performed on the UNITECH INDACO computing
platform of the Universit\`{a} di Milano and on the computing cluster of the Universit\'{e} de Bordeaux.
The authors warmly thank Alessio Alessi and Karim Belabas for their operative assistance for these
computations.

\subsection*{Notation}
We employ the Landau--Bachmann ``Big Oh'' notation $O$ as well as the associated Vinogradov symbols $\ll$
and $\gg$. We reserve the letter $p$ for prime numbers. We put $\log_1 x := \log x$ and $\log_{k+1} x :=
\log (\log_k x)$ for every integer $k \geq 1$ and every sufficiently large $x$.

\section{Proof of Lemma~\ref{lem:almostevery}}

The definition of $\alpha$ implies that we have an upper bound $\#\SS_N \leq e^{(\alpha + \eps/2) N}$,
for all large enough $N$. The claim follows by the Borel--Cantelli lemma. We have
\begin{align*}
\mathcal{E} &:= \{\tau \in \R\colon \m_N(\tau) \leq e^{-(\alpha+\eps) N} \text{ for infinitely many } N\} \\
&= \bigcap_{M = 1}^\infty \bigcup_{N \geq M} \{\tau \in \R\colon \m_N(\tau) \leq e^{-(\alpha+\eps) N}\} .
\end{align*}
Hence, the Lebesgue measure of $\mathcal{E}$ is estimated by
\begin{align*}
\operatorname{meas}(\mathcal{E})
\leq \inf_{M} \sum_{N \geq M} 2e^{-(\alpha+\eps) N} \#\SS_N
\leq \inf_{M} \sum_{N \geq M} 2e^{-\eps N/2}
= \inf_{M} \frac{2e^{-\eps M/2}}{1 - e^{-\eps/2}}
= 0.
\end{align*}
This implies that, for almost every $\tau$, the lower bound $\m_N(\tau) > e^{-(\alpha+\eps) N}$ holds for
all sufficiently large $N$.

\section{Proof of Theorem~\ref{thm:upper}}

For each prime number $p$ and for every positive integer $n$, let $\nu_p(n)$ denote the $p$-adic valuation of $n$.
We begin with the following easy lemma.

\begin{lem}\label{lem:density}
For each prime number $p$, let $\mu_p$ be a positive integer, and suppose that $\mu_p = 1$ for all but
finitely many primes $p$. Then, the natural density of the set
\begin{equation*}
D := \{n \in \mathbb{N}\colon \mu_p \mid \nu_p(n) \text{ for every prime $p$}\}
\end{equation*}
is equal to
\begin{equation*}
\delta := \prod_p \frac{1 - p^{-1}}{1 - p^{-\mu_p}} ,
\end{equation*}
where we note that only finitely many prime numbers contribute to the product.
The formula can also be written as
\begin{equation*}
\delta
= \Big(\sum_{d|M}\frac{1}{d}\Big)^{-1}
= M\Big(\sum_{d|M} d \Big)^{-1},
\end{equation*}
where $M:= \prod_{p} p^{\mu_p-1}$.
\end{lem}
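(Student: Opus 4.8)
The plan is to reduce everything to the finitely many primes that impose a genuine constraint. Put $S := \{p \colon \mu_p > 1\}$ and $P := \prod_{p \in S} p$, and call an integer $r$ a \emph{good $S$-number} if it is composed only of primes in $S$ and satisfies $\mu_p \mid \nu_p(r)$ for every $p \in S$. Since $\mu_p = 1$ forces no condition, an integer $n$ lies in $D$ if and only if it factors uniquely as $n = r m$ with $r$ a good $S$-number and $\gcd(m,P) = 1$. This yields the counting identity
\[
\#\{n \le x \colon n \in D\} \;=\; \sum_{\substack{r \le x \\ r \text{ good } S\text{-number}}} \#\{m \le x/r \colon \gcd(m,P) = 1\}.
\]

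Next I would insert the elementary sieve estimate $\#\{m \le y \colon \gcd(m,P) = 1\} = \tfrac{\varphi(P)}{P}\, y + O(2^{|S|})$, valid for every $y \ge 0$ by inclusion–exclusion over the (squarefree) divisors of $P$. The number of good $S$-numbers up to $x$ is $O_S\big((\log x)^{|S|}\big)$, so the accumulated contribution of the $O(2^{|S|})$ terms is $o(x)$. For the main term, the series $\sum_{r \text{ good}} 1/r = \prod_{p \in S} (1 - p^{-\mu_p})^{-1}$ converges, hence $\sum_{r \le x \text{ good}} 1/r \to \prod_{p \in S}(1 - p^{-\mu_p})^{-1}$ as $x \to \infty$. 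Combining, and using $\varphi(P)/P = \prod_{p\in S}(1 - p^{-1})$, gives
\[
\frac{\#\{n \le x \colon n \in D\}}{x} \;\longrightarrow\; \prod_{p \in S} \frac{1 - p^{-1}}{1 - p^{-\mu_p}} \;=\; \prod_{p} \frac{1 - p^{-1}}{1 - p^{-\mu_p}} \;=\; \delta,
\]
the last product extending harmlessly over all primes since the omitted factors equal $1$. This establishes both that the natural density exists and its value.

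Finally, for the two alternative expressions I would simply factor $M = \prod_{p \in S} p^{\mu_p - 1}$ and compute $\sum_{d \mid M} d^{-1} = \prod_{p \in S} \sum_{j=0}^{\mu_p - 1} p^{-j} = \prod_{p \in S} \frac{1 - p^{-\mu_p}}{1 - p^{-1}} = \delta^{-1}$, and then note $\sum_{d \mid M} d^{-1} = M^{-1} \sum_{d \mid M} d$ via the divisor involution $d \mapsto M/d$. I do not expect a real obstacle here: the only thing needing a word of care is the passage from the finite-range identity to the limit — namely controlling the tail of the $r$-sum and the cumulative $O(2^{|S|})$ error — and both are immediate because $S$ is finite, so all implied constants depend only on the finitely many $\mu_p$. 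This is the ``easy lemma'' the authors advertise.
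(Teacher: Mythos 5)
Your proof is correct and somewhat more elementary than the paper's. The paper packages the indicator of $D$ as a Dirichlet convolution $\delta_D = 1 * h$, where $h$ is read off from the factorization $\sum_n \delta_D(n) n^{-s} = \zeta(s)H(s)$ of the associated Dirichlet series, and then runs the standard argument $\sum_{n\le x}\delta_D(n) = \sum_{d\le x} h(d)\lfloor x/d\rfloor = xH(1) + O_\eps(x^\eps)$. You instead factor each $n\in D$ uniquely as $n = rm$ with $r$ a good $S$-number and $\gcd(m,P)=1$, handle the coprimality condition by explicit inclusion--exclusion, and sum the resulting geometric series directly. These are really the same decomposition in disguise --- expanding the M\"obius sum in your sieve step recovers precisely the convolution with the paper's $h$, whose support is the set of $S$-numbers --- but your write-up avoids Dirichlet series entirely and makes the error term transparent: the $O(2^{|S|})$ per-term error accumulates to $O_S\big((\log x)^{|S|}\big)$ because good $S$-numbers up to $x$ are $O_S\big((\log x)^{|S|}\big)$ in number, which is in fact sharper than the $O_\eps(x^\eps)$ the paper states (and consistent with their parenthetical remark about the error being $\ll (\frac{1}{\ell}\log x)^\ell$). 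The paper's route is shorter to state if one treats the $\zeta(s)H(s)$ formalism as routine; yours is self-contained and makes visible where each of the two factors $\prod_{p\in S}(1-p^{-1})$ (from the sieve) and $\prod_{p\in S}(1-p^{-\mu_p})^{-1}$ (from the $r$-sum) originates. Both are sound; there is no gap.
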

\begin{proof}
This is a standard argument, we reproduce it here for completeness. Let $\delta_D(n):=1$ when $n\in
D$ and $\delta_D(n):=0$ otherwise. Note that $\delta_D$ is a multiplicative function, so that
\begin{align*}
F(s) &:= \sum_{n=1}^{\infty} \frac{\delta_D(n)}{n^s}
      = \prod_p\left(1 + \sum_{b=1}^{\infty} \frac{\delta_D(p^{b})}{p^{bs}}\right)
      = \prod_p\left(1 + \sum_{b=1}^{\infty} \frac{1}{p^{\mu_p bs}}\right)\\
     &= \prod_p\left(1 - \frac{1}{p^{\mu_p s}}\right)^{-1}
      = \zeta(s)\prod_p\frac{1 - p^{-s}}{1 - p^{-\mu_p s}}
      =: \zeta(s)H(s).
\end{align*}
Note that $H(s)$ is a finite Euler product (because $\mu_p=1$ for a.e.~prime), which is analytic for
$\Re(s)>0$. Set $h\colon \N\to \R$, with $H(s)=:\sum_{n=1}^{\infty} h(n)n^{-s}$. Then
\begin{align*}
\sum_{n\leq x} \delta_D(n)
&= \sum_{n\leq x} \sum_{d \mid n}h(d)
= \sum_{d\leq x} h(d) \sum_{n\leq x/d} 1
= \sum_{d\leq x} h(d) \left\lfloor{\frac{x}{d}}\right\rfloor
= x\sum_{d\leq x} \frac{h(d)}{d}
+ O\!\left(\sum_{d\leq x} |h(d)|\right)\\
&= xH(1) + O\!\left(x\sum_{d>x} \frac{|h(d)|}{d}\right) + O\!\left(\sum_{d\leq x} |h(d)|\right)
 = xH(1) + O_\eps(x^\eps)
\end{align*}
and $\delta=H(1)$.
(A more careful analysis shows that the error term has size $\ll (\frac{1}{\ell}\log x)^\ell$, uniformly
in $\ell\geq 1$ and $x\geq 2$.)
The alternative representation of $\delta$ as a sum of divisors of $M$
follows immediately by the unique factorization of integers as a product of prime powers.
\end{proof}

The next lemma is our key tool to provide numerical upper bounds for $\alpha$.

\begin{lem}\label{lem:chains}
Let $\af_1 \subset \cdots \subset \af_\ell =:\af$ be finite nonempty sets of natural numbers. We have
\begin{equation*}
\alpha
\leq \log 2 - \delta \sum_{i = 1}^\ell \left(\frac{1}{\max(\af_{i})} - \frac{1}{\max(\af_{i + 1})}\right) \log\!\left(\frac{2^{\#\af_i}}{r_i}\right) ,
\end{equation*}
where $1/\max(\af_{\ell + 1}) := 0$,
\begin{equation*}
\delta := \Big(\sum_{d|M} \frac{1}{d} \Big)^{-1}
\quad\text{with}\quad
M:= \lcm\{a\colon a\in \af\},
\end{equation*}
and, for $i = 1,\dots,\ell$,
\begin{equation*}
r_i := \# \! R_i
\quad\text{with}\quad
R_i := \left\{\sum_{a \in \af_i} \frac{t_a}{a}\colon t_a \in \{0, 1\}\right\}.
\end{equation*}
\end{lem}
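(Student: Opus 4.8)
The plan is to cover the denominators $\{1,\dots,N\}$ by pairwise disjoint dilates $m\cdot\af_i$ of the sets $\af_i$, exploiting that inside a sum of unit fractions a block $m\cdot\af_i$ contributes only $r_i$ possible values instead of the naive $2^{\#\af_i}$; choosing, for each admissible multiplier $m$, the largest $\af_i$ that still fits inside $\{1,\dots,N\}$ then yields the asserted saving over $\log 2$.

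First I would set $A_i:=\max(\af_i)$ for $i\le\ell$ and $A_{\ell+1}:=+\infty$, so that $A_1\le\cdots\le A_\ell=\max(\af)$ by the nesting hypothesis and every element of every $\af_i$ divides $M$. Applying Lemma~\ref{lem:density} with $\mu_p:=\nu_p(M)+1$ (indeed $\mu_p=1$ for all but finitely many $p$, and $\prod_p p^{\mu_p-1}=M$), the set $D:=\{n\in\N\colon \mu_p\mid\nu_p(n)\text{ for all }p\}$ has natural density $\delta$, and in fact $\#(D\cap[1,x])=\delta x+O_\eps(x^\eps)$. For $m\in D$ with $m\le N/A_1$, put $i(m):=\max\{i\colon A_i\le N/m\}$ and $B_m:=\{ma\colon a\in\af_{i(m)}\}\subseteq\{1,\dots,N\}$. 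The crucial claim is that the sets $B_m$ are pairwise disjoint: if $ma=m'a'$ with $m,m'\in D$ and $a,a'\mid M$, then for each prime $p\mid M$ we have $\mu_p\mid\nu_p(m)$ and $0\le\nu_p(a)\le\nu_p(M)=\mu_p-1$, so $\nu_p(a)$ is the residue of $\nu_p(ma)$ modulo $\mu_p$; comparing with $m'a'$ forces $\nu_p(a)=\nu_p(a')$, while for $p\nmid M$ both valuations vanish since $a,a'\mid M$. Hence $a=a'$, and then $m=m'$.

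Next, writing $F:=\{1,\dots,N\}\setminus\bigcup_m B_m$, disjointness gives $\#F=N-\sum_m\#\af_{i(m)}$. Splitting each element of $\EE_N$ into its $F$-part and its $B_m$-parts, and observing that the values $\sum_{a\in\af_{i(m)}}t_a/(ma)$ with $t_a\in\{0,1\}$ form the set $\{x/m\colon x\in R_{i(m)}\}$ of cardinality $r_{i(m)}$, we obtain $\#\EE_N\le 2^{\#F}\prod_m r_{i(m)}$, that is
\begin{equation*}
\log\#\EE_N\le N\log2-\sum_{m\in D,\,m\le N/A_1}\log\frac{2^{\#\af_{i(m)}}}{r_{i(m)}}.
\end{equation*}
To evaluate the sum I would group the multipliers by the value of $i(m)$: one has $i(m)=i$ precisely when $N/A_{i+1}<m\le N/A_i$, so the density estimate for $D$ gives $\#(D\cap(N/A_{i+1},N/A_i])=\delta N(\tfrac1{A_i}-\tfrac1{A_{i+1}})+O_\eps(N^\eps)$ for each of the finitely many $i\le\ell$, whence the last sum equals $\delta N\sum_{i=1}^\ell(\tfrac1{A_i}-\tfrac1{A_{i+1}})\log(2^{\#\af_i}/r_i)+O_\eps(N^\eps)$. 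Dividing by $N$ and letting $N\to\infty$ yields the stated bound on $\alpha$.

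I expect the disjointness step to be the main obstacle, and it is what dictates the choice of $D$: one wants a set of multipliers as dense as possible for which distinct dilates of subsets of $\af$ never overlap, and the valuation condition defining $D$ — whose density $\delta=(\sum_{d\mid M}1/d)^{-1}$ strictly exceeds the density $\prod_{p\mid M}(1-1/p)$ of the multipliers coprime to $M$ whenever $M>1$ — is exactly what makes $\delta$, and hence the whole inequality, as strong as claimed. The remaining ingredients are routine: the product bound on $\#\EE_N$ coming from the disjoint block decomposition, and the layer‑cake summation over the ranges $N/m\in[A_i,A_{i+1})$ handled by Lemma~\ref{lem:density}.
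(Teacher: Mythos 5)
Your proof is correct and follows essentially the same route as the paper: the same choice of $D$ (your $\mu_p=\nu_p(M)+1$ is the paper's $1+\max\{\nu_p(a)\colon a\in\af\}$), the same disjointness argument via valuations mod $\mu_p$, the same decomposition of $\{1,\dots,N\}$ into dilated blocks $m\cdot\af_{i(m)}$ plus a remainder, and the same layer-cake summation using Lemma~\ref{lem:density}.
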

\begin{proof}
For any any prime $p$ let $\mu_p := 1 + \max\{\nu_p(a)\colon a \in \af\}$ and
\begin{equation*}
D := \{n \in \mathbb{N}\colon \mu_p \mid \nu_p(n) \text{ for every prime $p$}\} .
\end{equation*}
Let $k \af := \{ka\colon a \in \af\}$ for every integer $k$.
Note that $k_1 \af \cap k_2 \af = \emptyset$ for all $k_1, k_2 \in D$ with $k_1 \neq k_2$.
Indeed, suppose that $k_1 a_1 = k_2 a_2$ for some $k_1, k_2 \in D$ and $a_1, a_2 \in \af$.
Then, for every prime number $p$, we have
\begin{equation*}
\nu_p(a_1) - \nu_p(a_2) \equiv \nu_p(k_2) - \nu_p(k_1) \equiv 0 \pmod{\mu_p} ,
\end{equation*}
which in turn implies that $\nu_p(a_1) = \nu_p(a_2)$. Hence, $a_1 = a_2$ and $k_1 = k_2$.

Clearly $\mu_p$ is $1$ for all but finitely many prime numbers $p$. Hence, Lemma~\ref{lem:density}
applies, and the natural density of the set $D$ is
\begin{equation*}
\delta = \prod_p \frac{1 - p^{-1}}{1 - p^{-\mu_p}}
= \Big(\sum_{d|M} \frac{1}{d} \Big)^{-1},
\end{equation*}
with $M= \prod_{p} p^{\mu_p-1} =\lcm\{a\colon a\in \af\}$.
Let $N$ be a positive integer. Since $\af_1,\dots,\af_\ell$ are contained in $\af$, we have that the
sets $k \af_i$, with $i \in \{1,\ldots,\ell\}$ and $k \in D \cap \left(N / \max(\af_{i + 1}), N /
\max(\af_{i})\right]$ are pairwise disjoint. Let $F_N$ be their union. Clearly, $F_N \subseteq
\{1,\dots,N\}$. Also, we have
\begin{align}\label{equ:Fnsize}
\#F_N
&= \sum_{i = 1}^\ell \#\!\left(D \cap \left(\frac{N}{\max(\af_{i + 1})}, \frac{N}{\max(\af_{i})}\right]\right) \#\af_i \\
&= (\delta + o(1)) N \sum_{i = 1}^\ell \left(\frac{1}{\max(\af_{i})} - \frac{1}{\max(\af_{i + 1})}\right) \#\af_i , \nonumber
\end{align}
as $N \to +\infty$.

We are finally ready to give an upper bound for $\#\EE_N$. Every element of $\EE_N$ is of the form
\begin{equation*}
\sum_{i = 1}^\ell \sum_{k \in D \cap \left(\frac{N}{k \in \max(\af_{i + 1})}, \frac{N}{\max(\af_{i})}\right]} \sum_{a \in k\af_i} \frac{t_a}{a}
+ \sum_{b \in F_N^\prime} \frac{t_b}{b} ,
\end{equation*}
where $t_n \in \{0, 1\}$ for all $n \in \{1, \dots, N\}$, and $F_N^\prime := \{1, \dots, N\} \setminus
F_N$. Therefore, we have
\begin{align*}
\log \EE_N &\leq \sum_{i = 1}^\ell \#\!\left(D \cap \left(\frac{N}{\max(\af_{i + 1})}, \frac{N}{\max(\af_i)}\right]\right) \log r_i + \log (2^{N - \#F_N}) \\
&= N \left(\log 2 + (\delta + o(1)) \sum_{i = 1}^\ell \left(\frac{1}{\max(\af_{i})} - \frac{1}{\max(\af_{i + 1})}\right) \log r_i\right) - \#F_N \log 2 \\
&= N \left(\log 2 - (\delta + o(1)) \sum_{i = 1}^\ell \left(\frac{1}{\max(\af_{i})} - \frac{1}{\max(\af_{i + 1})}\right) \log\!\left(\frac{2^{\#\af_i}}{r_i}\right) \right) ,
\end{align*}
as $N \to +\infty$, where we used \eqref{equ:Fnsize}.
Consequently,
\begin{equation*}
\alpha \leq \log 2 - \delta \sum_{i = 1}^\ell \left(\frac{1}{\max(\af_{i})} - \frac{1}{\max(\af_{i + 1})}\right) \log\!\left(\frac{2^{\#\af_i}}{r_i}\right) ,
\end{equation*}
as claimed.
\end{proof}

Lemma~\ref{lem:chains} gives non-trivial bounds for $\alpha$ already when applied in the simplest case
$\ell=1$.
Table~\ref{tab:2} displays, for some choices of $\af=\af_1$, the value of the parameter $r=r_1$ (obtained
numerically) and the corresponding bound $\alpha\leq \log 2 - \frac{\delta}{\max(\af)}
\log({2^{\#\af}}/{r})$.

\begin{table}[H]
\begin{tabular}{LLLL}
  \toprule
   \af                                                 & r         &\delta/\max(\af) & \alpha \leq \\
  \midrule
%{\{2,3,6\}}                                             & 7        & 1/12            & 0.68201956\\
\{1,2,3,6\}                                            & 13        & 1/12            & 0.67584390\\
%{\{2,3,4,6,12\}                                         & 17       & 1/28            & 0.67055708\\
\{1,2,3,4,6,12\}                                       & 29        & 1/28            & 0.66487620\\      % <---- the result in \cite\{Proposition~2.7\}{MR3907571}
%\{2,3,4,5,6,8,10,12,15\}                               & 182       & 1/45            & 0.67016233\\
\{1,2,3,4,5,6,8,10,12,15\}                             & 302       & 1/45            & 0.66601285\\
%\{2,3,4,5,6,10,12,15,20\}                              & 102       & 1/56            & 0.66433732\\
\{1,2,3,4,5,6,10,12,15,20\}                            & 162       & 1/56            & 0.66022083\\
%\{2,3,4,5,6,8,10,12,15,16,20,24,30\}                   & 454       & 1/93            & 0.66204163\\
\{1,2,3,4,5,6,8,10,12,15,16,20,24,30\}                 & 694       & 1/93            & 0.65915160\\
%\{2,3,4,5,6,8,9,10,12,15,18,20,24,30\}                 & 701       & 2/195           & 0.66082356\\
\{1,2,3,4,5,6,8,9,10,12,15,18,20,24,30\}               & 1061      & 2/195           & 0.65796522\\
%\{2,3,4,5,6,7,8,9,10,12,14,15,18,20,21,24,28,30\}      & 5237      & 7/780           & 0.65802921\\
\{1,2,3,4,5,6,7,8,9,10,12,14,15,18,20,21,24,28,30\}    & 7757      & 7/780           & 0.65533420\\
  \bottomrule
\end{tabular}
\caption{Upper bounds for $\alpha$ in the case $\ell=1$.}
\label{tab:2}
\end{table}

Notice that it is always convenient to include $1$ in $\af$ since its presence does not affect the fraction
$\delta/\max(\af)$ whereas it typically increases the value of $\log({2^{\#\af}}/{r})$. The result
in~\cite[Proposition~2.7]{MR3907571} corresponds to the above construction with $\af = \{1,2,3,4,6,12\}$.
A judicious choice of a larger $\af$ improves the result, but the gain in $\log({2^{\#\af}}/{r})$ is
considerably tempered by the size of $\delta/\max(\af)$, which becomes smaller and smaller. The last
entry in Table~\ref{tab:2} represents the best bound we were able to obtain with this approach.

Better results can be attained by taking $\ell>1$. Given $1\leq a_1< a_2< \cdots <a_\ell$, one can
take the collection $\af_1\subset \af_2\subset \ldots\subset \af_\ell=:\af$
with $\af_i=\{a_1,\ldots, a_i\}$ for $i=1,\dots,\ell$. In this case the bound in Lemma~\ref{lem:chains} simplifies to
\begin{equation*}
\alpha
\leq \alpha_1 := \log 2 - \delta \sum_{i = 1}^\ell \left(\frac{1}{a_{i}} - \frac{1}{a_{i + 1}}\right) \log\!\left(\frac{2^i}{r_i}\right) ,
\end{equation*}
with $1 / a_{\ell + 1} := 0$.
A first naive choice is to take $\af_i:=\{1,\ldots, i\}$ for $1\leq i\leq \ell$. %
Notice that with this choice $r_i=\#\EE_i$. Now, for some $I\leq \ell$ let $\beta>0$ be such that
$r_i\leq e^{\beta i}$ for $I\leq i\leq \ell$. Then
\begin{align*}
\alpha_1 &= \log 2 - \delta \sum_{i = I}^{\ell} \Big(\frac{1}{i} - \frac{1}{i+1}\Big) i(\log 2-\beta) +O(\delta\log(1+|I|))\\[-0.5em]
&\leq \log 2 - \delta (\log 2-\beta) \sum_{i = 1}^{\ell} \frac{1}{i}+O(\delta\log(1+|I|)).
\end{align*}
Now
\begin{equation*}
\delta=\prod_{p\leq \ell} \frac{1 - p^{-1}}{1 - p^{-\mu_{p,\ell}}}
\end{equation*}
with $\mu_{p,\ell}=1+\max\{t \colon p^t\leq\ell\}=1+[\frac{\log \ell}{\log p}]\geq \frac{\log \ell}{\log p}.$
Thus, by Merten's theorem and since
\begin{align*}
\sum_{p\leq \ell}\log(1 - p^{-\mu_{p,\ell}})=\sum_{p\leq \ell}\log(1 - \ell^{-1})\ll 1/\log( \ell+1),
\end{align*}
one obtains $\delta\sim e^{-\gamma}/\log \ell$ as $\ell\to\infty$.
In particular, taking for example
$I=[e^{\sqrt{\log \ell}}]$, we get
\begin{equation*}
\alpha_1 \leq
\log 2 - e^{-\gamma} (\log 2-\beta)+o(1)
\end{equation*}
as $\ell\to\infty$ and so for any $\eps>0$ we obtain $\alpha_1<\log 2 - e^{-\gamma} (\log 2-\beta)+\eps$
if $\ell$ is large enough. By the definition of $\alpha$ we have that for all $\eps>0$ one has $\beta\leq
\alpha+\eps$ for large enough $\ell$, thus giving $\alpha_1 < \log 2 - e^{-\gamma} (\log 2-\alpha-\eps)$.
Notice that due to the loss of the factor $e^{-\gamma}$ this argument fails to recover an upper bound of
the type $\alpha+\eps$ for $\alpha_1$, and for example it would only gives $\alpha_1 \leq
(1-e^{-\gamma})\log 2 + \eps = 0.3039\dots+\eps$ if $\alpha=0$. However, it shows that by taking $\ell$
large enough one can surely get arbitrarily close to the upper bound $\log 2 - e^{-\gamma} (\log
2-\alpha)$ just by performing a finite numerical computation.

An alternative and more effective approach arises from the observation that the density $\delta$ depends only
on the valuations $\mu_p$, and so it is the same for different $\af$ sharing the same least common multiple.
In particular, it is convenient to select the numbers in $\af$ as the full collection of divisors of a
given integer $M$.
In this case the bound further simplifies to
\begin{equation}\label{equ:alpha1fulldivisors}
\alpha
\leq \alpha_1
\qquad\text{with}\qquad
\alpha_1 = \delta \sum_{i = 1}^\ell \left(\frac{1}{a_{i}} - \frac{1}{a_{i + 1}}\right) \log r_i .
\end{equation}
Indeed, if $a_1 < \cdots < a_\ell$ are all the divisors of $M$, then we get
\begin{align*}
\log 2 - \delta \sum_{i = 1}^\ell \left(\frac{1}{a_{i}} - \frac{1}{a_{i + 1}}\right) \log(2^i)
&= \left(1 - \delta \sum_{i = 1}^\ell \left(\frac{1}{a_{i}} - \frac{1}{a_{i + 1}}\right)i\right) \log 2\\
&= \left(1 - \delta \sum_{i = 1}^\ell \frac{1}{a_{i}}\right) \log 2
 = \left(1 - \delta \sum_{d|M} \frac{1}{d}\right) \log 2
 = 0.
\end{align*}
Numerically it seems that best results come from taking $M$ with only ``small'' prime
divisors (say, for example, $M=\lcm\{1,\dots,m\}$, for some $m$, or some small variations of it) and this
could be explained by the observation that with numbers of this shape an argument analogous to the above
does not have a loss of $e^{-\gamma}$ (notice however that in this case $r_i$ is not equal to $\#\EE_i$,
so the argument is not fully recursive as in the previous case).
Table~\ref{tab:3} collects some bounds
one gets in this way.
Most likely further improvements could be obtained by taking larger numbers.
However, the quantity of time
and memory one needs to compute $r_i$ for large $i$ prevented us to any significant improvement on the
value appearing at the bottom of Table~\ref{tab:3}.
\begin{table}[H]
\begin{tabular}{LL}
  \toprule
  M                                                                          & \alpha \leq \\
  \midrule
5040= 2^4\cdot 3^2\cdot 5\cdot 7                                             & 0.56731289\\
%
%15120= 2^4\cdot 3^3\cdot 5\cdot 7                                            & 0.56161508\\
%
%75600= 2^4\cdot 3^3\cdot 5^2\cdot 7                                          & 0.55451246\\
%
529200= 2^4\cdot 3^3\cdot 5^2\cdot 7^2                                       & 0.55084208\\
55440= 2^4\cdot 3^2\cdot 5\cdot 7\cdot 11                                    & 0.54939823\\
%
%1058400= 2^5\cdot 3^3\cdot 5^2\cdot 7^2                                      & 0.54727437\\
%
2116800= 2^6\cdot 3^3\cdot 5^2\cdot 7^2                                      & 0.54552567\\
4233600= 2^7\cdot 3^3\cdot 5^2\cdot 7^2                                      & 0.54465996\\
%
%720720= 2^4\cdot 3^2\cdot 5\cdot 7\cdot 11\cdot 13                           &  0.53392924\\
%
1441440= 2^5\cdot 3^2\cdot 5\cdot 7\cdot 11\cdot 13                          &  0.53020542\\
2162160= 2^4\cdot 3^3\cdot 5\cdot 7\cdot 11\cdot 13                          &  0.52779949\\
%
%3603600= 2^4\cdot 3^2\cdot 5^2\cdot 7\cdot 11\cdot 13                        &  0.52631749\\
%
4324320= 2^5\cdot 3^3\cdot 5\cdot 7\cdot 11\cdot 13                          &  0.52405384\\
%
%1555200= 2^8\cdot 3^5\cdot 5^2                                               & 0.57521544\\
%
%970200= 2^3\cdot 3^2\cdot 5^2\cdot 7^2\cdot 11                               & 0.54587876\\
%
%510510= 2\cdot 3\cdot 5\cdot 7\cdot 11 \cdot 13 \cdot 17                     & 0.60604573\\
%
%21621600= 2^5\cdot 3^3\cdot 5^2\cdot 7\cdot 11\cdot 13                       & 0.52014452\\ %   INDACO con f4 53min
%
%10810800= 2^5\cdot 3^3\cdot 5^2\cdot 7\cdot 11\cdot 13                       & 0.51637056\\ %   INDACO con f4 2h, 14min, 15,983 ms.
%
43243200= 2^6\cdot 3^3\cdot 5^2\cdot 7\cdot 11\cdot 13                       & 0.51452256\\ %   2^2*3*5*lcm([1..16]) INDACO con f4 time = 5h, 14min, 48,573 ms.
%
%73513440= 2^5\cdot 3^3\cdot 5\cdot 7\cdot 11\cdot 13\cdot 17                 & 0.51219144\\ %       2*3*lcm([1..17]) INDACO con f4 time = 14h (circa)
%
147026880= 2^6\cdot 3^3\cdot 5\cdot 7\cdot 11\cdot 13\cdot 17                & 0.51032288\\ %     2^2*3*lcm([1..17]) INDACO con f4 time = 28h, 14min, 14,468 ms.
%
%465585120= 2^5\cdot 3^2\cdot 5\cdot 7\cdot 11\cdot 13\cdot 17\cdot 19        & 0.50765322\\ %         2*lcm([1..19]) BORDEAUX con f6 time = 17min, 59,194 ms.
%
%1396755360= 2^5\cdot 3^3\cdot 5\cdot 7\cdot 11\cdot 13\cdot 17\cdot 19       & 0.50133369\\ %       2*3*lcm([1..19]) INDACO con f6 time = 28min, 43,589 ms.
%
2793510720= 2^5\cdot 3^3\cdot 5\cdot 7\cdot 11\cdot 13\cdot 17\cdot 19       & 0.49944226\\ %     2^2*3*lcm([1..19]) INDACO con f6 time = 1h, 7min, 33,403 ms.
%
%8380532160= 2^6\cdot 3^4\cdot 5\cdot 7\cdot 11\cdot 13\cdot 17\cdot 19       & 0.49739602\\ %     2^2*3*lcm([1..19]) INDACO con f6 time = 1h, 7min, 33,403 ms.
%
13967553600= 2^6\cdot 3^3\cdot 5^2\cdot 7\cdot 11\cdot 13\cdot 17\cdot 19    & 0.49153796\\ %     2^2*3*lcm([1..19]) INDACO con f6 time = 1h, 7min, 33,403 ms.
41902660800= 2^6\cdot 3^4\cdot 5^2\cdot 7\cdot 11\cdot 13\cdot 17\cdot 19    & 0.48948987\\ % 2^2*3^2*5*lcm([1..19]) INDACO con f6 time = 1h, 7min, 33,403 ms. <--- record
  \bottomrule
\end{tabular}
\caption{Upper bounds for $\alpha$ using increasing collections $\af_j:=\{a_1,\ldots, a_j\}$ for
$j=1,\ldots,\ell$ and $\af=\{d\colon d|M\}$ for different $M$.}
\label{tab:3}
\end{table}

The search of a good way to store the huge vectors containing the numbers in $R_i$ led us to consider
the following upper bound for $r_i$.
\begin{lem}\label{lem:A}
Let $\af:=\{a_1<a_2<\cdots<a_\ell\}$ be the full set of divisors of an integer $M$, let
$\af_i:=\{a_1,\ldots,a_i\}$, and let $R_i$ and $r_i$ as in Lemma~\ref{lem:chains}. Then
\begin{equation*}
r_i \leq 1 + \lcm\{a_1,\ldots,a_i\}\sum_{k=1}^i \frac{1}{a_k}
\end{equation*}
\end{lem}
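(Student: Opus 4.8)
The plan is to realize each element of $R_i$ as a fraction with a fixed denominator $L_i := \lcm\{a_1,\dots,a_i\}$, so that $R_i$ embeds into a set of rationals $m/L_i$ with $m$ a nonnegative integer, and then bound the number of possible numerators $m$. Concretely, for any choice of $t_{a_1},\dots,t_{a_i} \in \{0,1\}$ we have
\begin{equation*}
\sum_{k=1}^i \frac{t_{a_k}}{a_k} = \frac{1}{L_i}\sum_{k=1}^i t_{a_k}\frac{L_i}{a_k} = \frac{m}{L_i},
\end{equation*}
where $m = \sum_{k=1}^i t_{a_k} (L_i/a_k)$ is a nonnegative integer since $a_k \mid L_i$ for each $k \leq i$. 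The key observation is that $m$ ranges over a set of integers contained in the interval $[0, S_i]$ with $S_i := \sum_{k=1}^i L_i/a_k = L_i \sum_{k=1}^i 1/a_k$, obtained by taking all $t_{a_k}=1$.

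The main step is then simply that the map sending $(t_{a_1},\dots,t_{a_i})$ to $m/L_i$ has image of size at most the number of integers in $[0,S_i]$, which is $\lfloor S_i\rfloor + 1 \leq S_i + 1$. Since distinct elements of $R_i$ correspond to distinct values of $m$, this gives
\begin{equation*}
r_i = \#R_i \leq S_i + 1 = 1 + L_i\sum_{k=1}^i \frac{1}{a_k} = 1 + \lcm\{a_1,\dots,a_i\}\sum_{k=1}^i \frac{1}{a_k},
\end{equation*}
as claimed. I do not expect any genuine obstacle here: the argument is a counting bound via a common denominator, and the only things to check are that $L_i/a_k \in \N$ (immediate from $a_k \mid M$ and hence $a_k \mid L_i$, or directly from the definition of $\lcm$) and that $m$ is a nonnegative integer lying in $[0, S_i]$ with the two extreme values $0$ and $S_i$ attained by the all-zero and all-one choices. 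The hypothesis that $\af$ is the full set of divisors of $M$ is not even needed for this particular inequality — only that $a_1,\dots,a_i$ are positive integers — but it is natural to state it in the same setting as Lemma~\ref{lem:chains} since that is how the bound will be applied.

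One minor point worth spelling out is that the bound is trivially sharp in form but typically very far from the truth, because most integers $m \in [0,S_i]$ are not representable; nonetheless it suffices to give an a priori size estimate for $r_i$ (and hence a lower bound for $\log(2^i/r_i)$ in Lemma~\ref{lem:chains}) without having to compute $R_i$ explicitly, which is precisely the computational bottleneck mentioned above. If one wanted a cleaner writeup, I would phrase it as: $L_i \cdot R_i \subseteq \{0,1,\dots,\lfloor S_i\rfloor\}$ as sets of integers, whence $\#R_i = \#(L_i\cdot R_i) \leq \lfloor S_i\rfloor + 1$.
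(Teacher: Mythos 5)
Your proof is correct, and it is slightly cleaner than the one in the paper. Both proofs clear denominators by multiplying $R_i$ by $L_i := \lcm\{a_1,\dots,a_i\}$, observe that the result is a set of integers, and bound its size by the length of the range. The difference is that the paper replaces the $\{0,1\}$-coefficient sums by $\{\pm 1\}$-coefficient sums (same cardinality, by the affine bijection $t\mapsto 2t-1$), gets a set contained in $[-M_i,M_i]\cap\Z$ with $M_i:=\sum_k L_i/a_k$, and then invokes a parity argument ($\pm 1\equiv 1\pmod 2$) to cut the count $1+2M_i$ in half. You instead stay with the $t_a\in\{0,1\}$ sums exactly as $R_i$ is defined, note that $L_iR_i\subseteq\{0,1,\dots,M_i\}$, and read off $r_i\leq 1+M_i$ directly, with no parity step and no detour through a shifted set. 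The conclusions are identical, but your route avoids the extra sign-flip and is more faithful to the stated definition of $R_i$ (the paper's proof writes $L_iR_i=\{\sum\pm L_i/a_k\}$, which implicitly uses the affine change of variables without comment). You also correctly observe that the hypothesis that $\af$ is the full divisor set of $M$ is not used; the inequality holds for any finite set of positive integers. One tiny remark: the floor in $\lfloor S_i\rfloor+1$ is unnecessary since $S_i=M_i$ is already an integer.
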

\begin{proof}
Let $L_i := \lcm\{a_1, \dots, a_i\}$.
Hence, $L_i/a_k\in \Z$ for $k=1,\dots,i$.
Consequently, $L_iR_i = \{\sum_{k=1}^i \pm L_i/a_k \}\subseteq \Z$. Moreover, $\max\{L_iR_i\} =
-\min\{L_iR_i\} = M_i := \sum_{k=1}^i L_i/a_k$, so that $\#(L_iR_i)\leq 1+2M_i$. We can improve this
bound by a factor $2$ since $\pm 1\equiv 1\pmod 2$, so that all numbers in $L_iR_i$ have the same parity of
$M_i$. As a consequence, $r_i = \#R_i = \#(L_iR_i)\leq 1+M_i$, as desired.
\end{proof}

At this point, we are ready to explain the method that we used to prove the bound of
Theorem~\ref{thm:upper}. The main idea is to use the upper bound given by~\eqref{equ:alpha1fulldivisors},
but computing the exact value of $r_i$ only for small $i$. Precisely, let $M$ be a (very large) positive
integer and let $a_1 < \cdots < a_\ell$ be all its divisors. Also, let $M^\prime$ be a (small) divisor of
$M$ and let $a_1^\prime < \cdots < a_m^\prime$ be all its divisors. %
For each divisor $a_j^\prime$ of $M^\prime$ we pre-compute the value $r_j^\prime := \left\{\sum_{a \in
\af_j^\prime} \frac{t_a}{a} : t_a \in \{0,1\}\right\}$, where $\af_j^\prime := \{a_1^\prime, \dots,
a_j^\prime\}$. Then, for each divisor $a_i$ of $M$ we look for the largest $a_j^\prime$ dividing $a_i$,
and we estimate $r_i$ with the minimum between what we get from Lemma~\ref{lem:A} and the number
$r_j^\prime\cdot 2^{\#\af_i-\#\af_j^\prime}$. %
This is a correct bound since, by the assumption on $j$, we have $\af_j^\prime \subseteq \af_i$ and
consequently $r_{i}$ is at most $r_j^\prime$ multiplied by the power of two elevated to the difference of
the cardinalities of $\af_i$ and $\af_j'$.

Table~\ref{tab:5} collects some results we get in this way for suitable choices of $M^\prime$ and $M$.
The last entry gives our best result, which proves Theorem~\ref{thm:upper}.
Its computation needed approximatively 40 hours and 200GB.

\begin{table}[H]
\begin{tabular}{LLL}
  \toprule
  M                         & M^\prime                                  & \alpha \leq \\
  \midrule
\lcm\{1,\dots, 17\}         & \lcm\{1,\dots, 10\}                       & 0.52408774\\
\lcm\{1,\dots, 31\}         & \lcm\{1,\dots, 17\}                       & 0.47705355\\
\lcm\{1,\dots, 41\}         & \lcm\{1,\dots, 17\}                       & 0.46441548\\
\lcm\{1,\dots, 53\}         & \lcm\{1,\dots, 17\}                       & 0.44930098\\
\lcm\{1,\dots, 79\}         & \lcm\{1,\dots, 19\}                       & 0.43238142\\
\lcm\{1,\dots, 89\}         & \lcm\{1,\dots, 19\}                       & 0.42691423\\
\lcm\{1,\dots, 97\}         & \lcm\{1,\dots, 23\}                       & 0.42447521\\
\lcm\{1,\dots, 97\}         & 2\cdot3\cdot\lcm\{1,\dots, 19\}           & 0.42310594\\
\lcm\{1,\dots, 97\}         & 2^2\cdot3^2\cdot\lcm\{1,\dots, 19\}       & 0.42286665\\
\lcm\{1,\dots, 97\}         & 2^2\cdot3^2\cdot5\cdot\lcm\{1,\dots, 19\} & 0.42099405\\
  \bottomrule
\end{tabular}
\caption{Upper bounds for $\alpha$ mixing the computation of the true value of $r_i$ for small divisors, and
estimating its value with Lemma~\ref{lem:A} for large divisors. The relevant parameters are $M$ and $M^\prime$.}
\label{tab:5}
\end{table}

\section{Proof of Theorem~\ref{thm:lower}}\label{sec:lower}

Define the set
\begin{equation*}
\mathcal{U} := \left\{N \geq 1\colon \sum_{n=1}^{N-1} \frac{w_n}{n}
\neq \frac{1}{N}\colon \forall w_1, \dots, w_{N-1} \in \{-1,0,+1\}\right\} ,
\end{equation*}
and let $\mathcal{U}(x) := \mathcal{U} \cap [1, x]$ for every $x \geq 1$.

\begin{lem}\label{lem:doubling}
If $N \in \mathcal{U}$ then $\#\EE_N = 2\#\EE_{N - 1}$. Consequently, $\#\EE_N \geq
2^{\#\mathcal{U}(N)}$.
\end{lem}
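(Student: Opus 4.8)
The plan is to establish the identity $\#\EE_N = 2\#\EE_{N-1}$ for $N \in \mathcal{U}$ by analyzing the natural two-to-one style map that "forgets" whether the denominator $N$ is used. First I would observe that every element of $\EE_N$ is of the form $\sigma$ or $\sigma + 1/N$, where $\sigma = \sum_{n=1}^{N-1} t_n/n \in \EE_{N-1}$; this gives the trivial bound $\#\EE_N \leq 2\#\EE_{N-1}$ without any hypothesis. The content of the lemma is that when $N \in \mathcal{U}$ this is an equality, i.e.\ the two "copies" $\EE_{N-1}$ and $\EE_{N-1} + 1/N$ are disjoint inside $\EE_N$. So the key step is: $\EE_{N-1} \cap (\EE_{N-1} + 1/N) = \emptyset$ whenever $N \in \mathcal{U}$.

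To prove this disjointness, suppose for contradiction that $\sigma_1 = \sigma_2 + 1/N$ for some $\sigma_1, \sigma_2 \in \EE_{N-1}$, say $\sigma_1 = \sum_{n=1}^{N-1} s_n/n$ and $\sigma_2 = \sum_{n=1}^{N-1} t_n/n$ with $s_n, t_n \in \{0,1\}$. Rearranging gives $\sum_{n=1}^{N-1} (s_n - t_n)/n = 1/N$, and setting $w_n := s_n - t_n \in \{-1,0,+1\}$ we obtain precisely a representation $\sum_{n=1}^{N-1} w_n/n = 1/N$ with $w_n \in \{-1,0,+1\}$, contradicting $N \in \mathcal{U}$. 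Hence the two copies are disjoint, which combined with the trivial bound yields $\#\EE_N = 2\#\EE_{N-1}$.

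For the final assertion, I would argue by induction on $N$ (or telescope along the elements of $\mathcal{U}$). One always has $\#\EE_N \geq \#\EE_{N-1}$ since $\EE_{N-1} \subseteq \EE_N$ (take $t_N = 0$), and whenever $N \in \mathcal{U}$ the quotient jumps by a factor of exactly $2$ by the first part. Starting from $\#\EE_1 = 2 = 2^{1}$ and $1 \in \mathcal{U}$ (an empty sum is $0 \neq 1$), every time the index passes through an element of $\mathcal{U}$ the cardinality at least doubles, so after passing through all of $\mathcal{U}(N)$ one gets $\#\EE_N \geq 2^{\#\mathcal{U}(N)}$.

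I do not expect any real obstacle here: the only slightly delicate point is bookkeeping the $\{-1,0,+1\}$ coefficients correctly so that the difference of two Egyptian fractions (with denominators $\leq N-1$) is matched exactly with the defining condition of $\mathcal{U}$, and making sure the base case $1 \in \mathcal{U}$ is consistent with the convention for empty sums. Everything else is a one-line application of the trivial upper bound and a monotone telescoping argument.
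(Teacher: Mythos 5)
Your proposal is correct and takes essentially the same route as the paper: write $\EE_N = \EE_{N-1} \cup (\EE_{N-1}+1/N)$, show these two sets are disjoint when $N\in\mathcal{U}$ by subtracting two representations to get coefficients in $\{-1,0,1\}$, and telescope over $\mathcal{U}(N)$ using monotonicity of $\#\EE_N$. The paper states the disjointness more tersely, but the underlying argument is identical to yours.
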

\begin{proof}
On the one hand, $N \in \mathcal{U}$ implies that $\EE_{N - 1} \cap (\EE_{N - 1} + 1 / N) = \emptyset$.
On the other hand, $\EE_N = \EE_{N - 1} \cup (\EE_{N - 1} + 1/N)$.
Hence, we have $\#\EE_N = \#\EE_{N - 1} + \#(\EE_{N - 1} + 1/N) = 2\#\EE_{N - 1}$, as claimed.
\end{proof}

In light of Lemma~\ref{lem:doubling}, to produce a lower bound for $\#\EE_N$ it is sufficient
to give a lower bound for $\#\mathcal{U}(N)$.

\begin{lem}\label{lem:primes}
$\mathcal{U}$ contains $1$ and all prime numbers.
\end{lem}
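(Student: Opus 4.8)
The plan is to show that the equation $\sum_{n=1}^{N-1} w_n/n = 1/N$ has no solution with $w_n \in \{-1,0,+1\}$ whenever $N$ is prime (the case $N=1$ being vacuous, as the empty sum is $0 \neq 1$). First I would argue by contradiction: suppose $N = p$ is prime and $\sum_{n=1}^{p-1} w_n/n = 1/p$ for some choice of signs $w_n \in \{-1,0,+1\}$. Multiply both sides by $L := \lcm\{1,\dots,p-1\}$ to clear denominators on the left, obtaining $\sum_{n=1}^{p-1} w_n (L/n) = L/p$. Since $p$ is prime and $p > n$ for all $n \leq p-1$, we have $p \nmid L$, so $L/p$ is not an integer; but the left-hand side is a sum of integers, hence an integer. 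This contradiction shows $p \in \mathcal{U}$.

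Alternatively, and perhaps more cleanly, I would work $p$-adically: the right-hand side $1/p$ has $p$-adic valuation $\nu_p(1/p) = -1$, while every term $w_n/n$ on the left with $1 \leq n \leq p-1$ satisfies $\nu_p(w_n/n) \geq 0$ since $p \nmid n$ and $p \nmid w_n$ (as $|w_n| \leq 1$). Therefore the left-hand side lies in $\Z_p$, i.e.\ has nonnegative $p$-adic valuation, whereas the right-hand side does not — a contradiction. This immediately gives that every prime belongs to $\mathcal{U}$, and since the sum defining the condition for $N=1$ is empty (hence $0$), and $0 \neq 1 = 1/1$, we also get $1 \in \mathcal{U}$.

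There is essentially no obstacle here; the only point requiring a moment's care is the observation that $w_n \in \{-1,0,+1\}$ forces $p \nmid w_n$ for every nonzero $w_n$, which is what makes the valuation argument go through — a larger coefficient set would break it. I would present the valuation version as the main proof since it is the shortest and makes transparent why primality of $N$ is exactly the right hypothesis: it is precisely when $N$ itself introduces a prime into the denominator on the right that cannot be matched on the left.
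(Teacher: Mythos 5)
Your proposal is correct and, in its second (valuation) form, is essentially identical to the paper's proof: the left-hand side is $p$-integral since every denominator $n\leq p-1$ is coprime to $p$, while $1/p$ has negative $p$-adic valuation. The first (lcm) variant is just the same observation phrased without valuations, and the $N=1$ case is handled the same way.
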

\begin{proof}
The fact that $1\in\mathcal{U}$ follows immediately from the definition of $\mathcal{U}$. Furthermore,
for every prime number $p$ and for every $w_1, \dots, w_{p-1} \in \{-1,0,+1\}$, the identity
\begin{equation*}
\sum_{n=1}^{p-1} \frac{w_n}{n} = \frac{1}{p}
\end{equation*}
is impossible, since the left-hand side has nonnegative $p$-adic valuation (all denominators are not
divisible by $p$) while the right-hand side has negative $p$-adic valuation. Hence, $p \in \mathcal{U}$.
\end{proof}

For each positive integer $m$, let $d_m := \lcm\{1, \dots, m\}$ and $g_m := d_m \sum_{j = 1}^m 1 / j$.

\begin{lem}\label{lem:mpinUgm}
If $m \in \mathcal{U}$ and $p > g_m$ is a prime number, then $mp \in \mathcal{U}$.
\end{lem}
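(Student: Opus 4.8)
The plan is to argue by contradiction: assume $mp\notin\mathcal{U}$, so that $\frac1{mp}=\sum_{n=1}^{mp-1}\frac{w_n}{n}$ for some $w_1,\dots,w_{mp-1}\in\{-1,0,+1\}$, and deduce that $\frac1m$ then admits a representation of the same type with denominators less than $m$, contradicting $m\in\mathcal{U}$. The natural first move is to separate the terms according to divisibility by $p$. Writing
\begin{equation*}
A:=\sum_{\substack{1\le n\le mp-1\\ p\nmid n}}\frac{w_n}{n},
\end{equation*}
and noting that the remaining indices are exactly the multiples $n=pk$ with $1\le k\le m-1$ (since $pk\le mp-1$ forces $k<m$), one obtains $\frac1{mp}=A+\frac1p\sum_{k=1}^{m-1}\frac{w_{pk}}{k}$, and hence, after multiplying by $p$,
\begin{equation*}
\frac1m = pA + B,\qquad B:=\sum_{k=1}^{m-1}\frac{w_{pk}}{k}.
\end{equation*}

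The crux is to show that $A=0$. On the one hand, $A$ is a sum of fractions all of whose denominators are coprime to $p$, so $p$ does not divide the denominator of $A$ written in lowest terms; equivalently $\nu_p(A)\ge 0$. On the other hand, multiplying the displayed identity by $d_m=\lcm\{1,\dots,m\}$ shows that
\begin{equation*}
C:=p\,d_m A=\frac{d_m}{m}-d_m B
\end{equation*}
is an integer, because $m\mid d_m$ and $k\mid d_m$ for every $1\le k\le m-1$. Moreover, using $|w_{pk}|\le1$ and the hypothesis $p>g_m$,
\begin{equation*}
|C|\le \frac{d_m}{m}+\sum_{k=1}^{m-1}\frac{d_m}{k}=\sum_{k=1}^{m}\frac{d_m}{k}=g_m<p.
\end{equation*}
Since every prime power dividing $d_m$ is at most $m<p$, we have $p\nmid d_m$; so from $A=C/(p\,d_m)$ we would get $\nu_p(A)=-1$ unless $C=0$ (as $0<|C|<p$ would force $p\nmid C$). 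Because $\nu_p(A)\ge 0$, we conclude $C=0$, hence $A=0$.

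With $A=0$ in hand, the identity $\frac1m=pA+B$ collapses to $\frac1m=B=\sum_{k=1}^{m-1}\frac{w_{pk}}{k}$, an equality with all coefficients $w_{pk}\in\{-1,0,+1\}$ and all denominators $<m$; this contradicts $m\in\mathcal{U}$. Therefore no representation of $\frac1{mp}$ of the forbidden type exists, i.e.\ $mp\in\mathcal{U}$. The only genuinely delicate point is the vanishing of $A$: this is exactly where the size condition $p>g_m$ is used, forcing the integer $C$ to be too small to be a nonzero multiple of $p$ while also having to be consistent with $\nu_p(A)\ge 0$; the rest is bookkeeping with the splitting of the sum.
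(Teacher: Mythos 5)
Your proof is correct and follows essentially the same route as the paper: split the sum by divisibility by $p$, use the $p$-adic valuation of the part with denominators coprime to $p$, and invoke the size bound $g_m<p$ to force a contradiction. The only difference is the order of the argument — you first show the $p$-coprime part $A$ vanishes and then contradict $m\in\mathcal{U}$, whereas the paper first uses $m\in\mathcal{U}$ to get nonvanishing and then derives the impossibility of the resulting congruence — but the ideas are identical.
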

\begin{proof}
Suppose by contradiction that $N := mp \notin \mathcal{U}$. %
Hence, there exist $w_1, \dots, w_{N - 1} \in \{-1,0,+1\}$ such that
\begin{equation*}
\sum_{n=1}^{N-1} \frac{w_n}{n} = \frac{1}{N} .
\end{equation*}
Consequently, splitting the sum according to whether $p|n$ and multiplying by $p$ we obtain
\begin{equation*}
p\sum_{\substack{n=1 \\ p \nmid n}}^{N-1} \frac{w_n}{n} + \sum_{j = 1}^{m - 1} \frac{w_{pj}}{j} = \frac{1}{m} ,
\end{equation*}
which in turn implies that
\begin{equation}\label{equ:wpj}
\sum_{j = 1}^{m - 1} \frac{w_{pj}}{j} - \frac{1}{m} \equiv 0 \pmod p .
\end{equation}
Since $m \in \mathcal{U}$, the number on the left hand side is non-zero. Moreover, the absolute value of
its numerator is at most $g_m$, which by hypothesis is strictly smaller than $p$. But then
\eqref{equ:wpj} is impossible.
\end{proof}

\begin{rmk}
The proofs of Lemma~\ref{lem:primes} and Lemma~\ref{lem:mpinU} can be easily adapted to show that if $m
\in \mathcal{U}$ and $p > g_m$ is a prime number then $mp^k \in \mathcal{U}$ for every positive integer
$k$. However, this generalization does not lead to an improvement of our final result.
\end{rmk}

\begin{lem}\label{lem:mpinU}
If $m \in \mathcal{U}$ and $p > 3^m$ is a prime number, then $mp \in \mathcal{U}$.
\end{lem}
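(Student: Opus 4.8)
The plan is to run the proof of Lemma~\ref{lem:mpinUgm} essentially verbatim; the only new ingredient needed is the elementary inequality $g_m < 3^m$, valid for every $m \geq 1$. First I would argue by contradiction, supposing $N := mp \notin \mathcal{U}$. Then there exist $w_1, \dots, w_{N-1} \in \{-1,0,+1\}$ with $\sum_{n=1}^{N-1} w_n/n = 1/N$; splitting the sum according to whether $p \mid n$ and multiplying through by $p$, exactly as in the derivation of~\eqref{equ:wpj}, I obtain that
\begin{equation*}
\xi := \sum_{j=1}^{m-1} \frac{w_{pj}}{j} - \frac{1}{m}
\end{equation*}
satisfies $\xi \equiv 0 \pmod{p}$, the congruence being understood in $\mathbb{Z}_{(p)}$ since every denominator occurring is at most $m - 1 < p$. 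Moreover $\xi \neq 0$, because $m \in \mathcal{U}$.

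Next I would clear denominators. Writing $d_m := \lcm\{1, \dots, m\}$, the number $d_m \xi$ is a nonzero integer with $|d_m \xi| \leq d_m \sum_{j=1}^m 1/j = g_m$, and $p \nmid d_m$ because every prime dividing $d_m$ is at most $m < 3^m < p$. Hence $\xi \equiv 0 \pmod p$ forces $p \mid d_m \xi$, and since $d_m \xi \neq 0$ this yields $p \leq |d_m \xi| \leq g_m$. Up to here the argument is identical to that of Lemma~\ref{lem:mpinUgm}.

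It remains to contradict the hypothesis $p > 3^m$, that is, to prove that $g_m < 3^m$ for every $m \geq 1$; equivalently, this exhibits the present lemma as nothing but the special case of Lemma~\ref{lem:mpinUgm} in which one strengthens the assumption $p > g_m$ to $p > 3^m$. I would bound $g_m = d_m \sum_{j \leq m} 1/j \leq d_m(1 + \log m)$ and insert an effective Chebyshev-type estimate for $\lcm\{1, \dots, m\}$ --- for instance $\lcm\{1, \dots, m\} < e^{1.04\,m}$, a consequence of the Rosser--Schoenfeld bound $\psi(m) < 1.03883\,m$ --- to get $d_m(1 + \log m) < e^{1.04\,m}(1 + \log m) < e^{(\log 3)\,m} = 3^m$ for every $m$ past a small explicit threshold, dispatching the finitely many remaining values of $m$ by direct computation. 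The main (and in fact the only) obstacle I foresee is precisely this last inequality: Hanson's clean bound $\lcm\{1, \dots, m\} < 3^m$ does not by itself leave room for the harmonic factor $\sum_{j \leq m} 1/j$, so one genuinely needs a bound on the least common multiple with a bit of slack, or else the short finite verification. Everything else is a line-by-line repetition of the proof of Lemma~\ref{lem:mpinUgm}.
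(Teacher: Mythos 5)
Your proposal is correct and follows essentially the same route as the paper: reduce to Lemma~\ref{lem:mpinUgm} by proving $g_m < 3^m$, using the Rosser-type bound $d_m \leq e^{1.04m}$ together with $\sum_{j\leq m} 1/j \leq 1 + \log m$, and checking the small cases directly (the paper verifies $m \leq 24$ by computation and uses the analytic bound for $m \geq 25$). The only cosmetic difference is that you re-run the argument of Lemma~\ref{lem:mpinUgm} inline rather than citing it outright, but as you yourself note the lemma is exactly that special case.
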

\begin{proof}
In light of Lemma~\ref{lem:mpinUgm}, it is enough to prove that $g_m < 3^m$ for every positive integer $m$.
On the one hand, from~\cite[p.~228]{MR0003018} we know that $d_m = \exp(\psi(m)) \leq \exp(1.04 m)$.
On the other hand,
\begin{equation*}
\sum_{j=1}^m \frac{1}{j} \leq 1 + \int_1^m \frac{\mathrm{d}t}{t} = \log(em) .
\end{equation*}
Hence, $g_m \leq \exp(1.04 m) \log(em) < 3^m$ for every integer $m \geq 25$.
A direct computation shows that the $g_m < 3^m$ holds also for $m=1,\dots,24$.
\end{proof}

As usual, let $\pi(x)$ denote the number of prime numbers not exceeding $x$.
The next lemma gives a recursive lower bound for $\#\mathcal{U}(x)$.

\begin{lem}\label{lem:recbound}
Let $y\geq 1$ and $x \geq 3^y$. Then
\begin{equation*}
\#\mathcal{U}(x) \geq \sum_{m \in \mathcal{U}(y)} \pi\!\left(\frac{x}{m}\right) - 2\cdot 3^y .
\end{equation*}
\end{lem}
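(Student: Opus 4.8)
The plan is to count, for each $m \in \mathcal{U}(y)$, the primes $p$ with $mp \le x$ and $p > 3^m$, and then use Lemma~\ref{lem:mpinU} to conclude that each such product $mp$ lies in $\mathcal{U}$. First I would fix $m \in \mathcal{U}(y)$ and note that Lemma~\ref{lem:mpinU} guarantees $mp \in \mathcal{U}$ whenever $p$ is a prime with $3^m < p \le x/m$; moreover, such $mp$ satisfies $mp \le x$, so it is counted by $\#\mathcal{U}(x)$. The number of such primes is $\pi(x/m) - \pi(3^m)$, which is at least $\pi(x/m) - 3^m$ (crudely bounding $\pi(t) \le t$). Since $m \le y$, this is at least $\pi(x/m) - 3^y$. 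Summing over the (at most $\pi(y)+1 \le y \le 3^y$, hence at most $3^y$) values of $m \in \mathcal{U}(y)$ — here the hypothesis $x \ge 3^y$ is only needed so that $x/m \ge x/y \ge 3^y/y \ge 3$, keeping $\pi(x/m)$ meaningful and the bounds non-vacuous — the total count of integers obtained is at least $\sum_{m \in \mathcal{U}(y)} \pi(x/m) - 3^y \cdot 3^y$, which is not quite the claimed bound; so I need to be more careful about double counting and about the size of the correction term.

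The subtle point, and the main obstacle, is that the products $mp$ for different $m \in \mathcal{U}(y)$ need not be distinct: a single integer $n \le x$ could be written as $m_1 p_1 = m_2 p_2$ with $m_1 \ne m_2$ both in $\mathcal{U}(y)$. I would handle this by observing that if $p > 3^m \ge 3^y \ge y \ge m$ for every relevant $m$, then $p$ is the largest prime factor of $mp$ and in fact $p > m$ forces $p \nmid m$, so the factorization $n = (\text{part of } n \text{ that is} \le y) \cdot (\text{a single large prime})$ is essentially unique once we also demand the prime exceeds $3^y$. More precisely, for $n = mp$ with $p > 3^y$ prime and $m \le y$, the prime $p$ is uniquely determined as the unique prime factor of $n$ exceeding $3^y$ (any other prime factor is at most $y \le 3^y$, and there cannot be two factors exceeding $3^y$ since their product would exceed $9^y > x$ once — wait, we only know $x \ge 3^y$, not $x \ge 9^y$). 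This last gap is real: to force uniqueness of $p$ I should instead argue that $m = n/p \le y$ pins down $m$ once $p$ is known, and $p$ ranges over primes in $(3^m, x/m]$; two representations $m_1 p_1 = m_2 p_2$ with $p_1 \ne p_2$ would give $p_1 \mid m_2 \le y < p_1$, a contradiction, while $p_1 = p_2$ forces $m_1 = m_2$. Hence the products are genuinely distinct across all choices, and the count is exactly $\sum_{m \in \mathcal{U}(y)} \big(\pi(x/m) - \pi(3^m)\big)$.

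Finally I would collect the error terms: $\pi(3^m) \le \pi(3^y) \le 3^y$ for each $m \in \mathcal{U}(y)$, and there are at most $\#\mathcal{U}(y) \le y + 1$ terms. This gives a loss of at most $(y+1) \cdot 3^y$, which is slightly worse than the stated $2 \cdot 3^y$. To recover the sharper constant I would separate the contribution of $m = 1$ (which contributes $\pi(x) - \pi(3) = \pi(x) - 2$, i.e. an error of exactly $2$, not $3^y$) from the contribution of $m \ge 2$, and for those use the better bound $\pi(3^m) \le 3^m/m < 3^m \le 3^{y-1}$ together with $\#\{m \in \mathcal{U}(y) : m \ge 2\} \le y$, yielding a total correction at most $2 + y \cdot 3^{y-1} \le 2 \cdot 3^y$ for all $y \ge 1$. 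Thus
\[
\#\mathcal{U}(x) \ge \sum_{m \in \mathcal{U}(y)} \pi\!\left(\frac{x}{m}\right) - 2 \cdot 3^y,
\]
as claimed. The only genuinely delicate step is the disjointness argument of the previous paragraph; everything else is routine estimation of $\pi$ and counting of $\mathcal{U}(y)$.
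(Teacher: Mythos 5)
Your overall strategy matches the paper's: count products $mp$ with $m \in \mathcal{U}(y)$ and $p$ a large prime, invoke Lemma~\ref{lem:mpinU}, establish that distinct pairs give distinct products, then account for the loss from restricting $p$ to be large. However, your uniqueness argument has a genuine gap. You take $p$ to range over primes in $(3^m, x/m]$, and then assert that two representations $m_1 p_1 = m_2 p_2$ with $p_1 \neq p_2$ give $p_1 \mid m_2 \le y < p_1$. The step ``$y < p_1$'' is not justified under your constraint: with $m_1 = 1$ you only know $p_1 > 3$, not $p_1 > y$. The paper sidesteps this by taking $p$ to range over primes in the \emph{uniform} interval $(3^y, x/m]$; this still satisfies the hypothesis of Lemma~\ref{lem:mpinU} since $3^y \ge 3^m$ for $m \le y$, and then $p_1 > 3^y > y \ge m_2$ gives the contradiction immediately. (Your version can also be repaired, but needs a two-step argument: $p_1 \mid m_2$ and $p_2 \mid m_1$ give $p_1 \le m_2$ and $p_2 \le m_1$, hence $p_2 > 3^{m_2} \ge 3^{p_1} > p_1$ and symmetrically $p_1 > p_2$, absurd. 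You don't give this.)

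A second, more minor point: your error accounting uses ``$\pi(3^m) \le 3^m/m < 3^m \le 3^{y-1}$'' for $m \ge 2$. The last inequality fails when $m = y$, and the chain as a whole fails for small $y$ (e.g.\ $y=2$, $m=2$: $\pi(9)=4 > 3^{y-1}=3$). A geometric-series estimate $\sum_{m=2}^{y} \pi(3^m) \le \sum_{m=2}^{y} 3^m/2 < \tfrac{3}{4}\cdot 3^y$ would repair this, but the paper's version is cleaner: with the uniform cutoff at $3^y$, every $m$ loses exactly $\pi(3^y)$, and $\#\mathcal{U}(y) \le y$ gives a total loss $\le y\,\pi(3^y) \le 2\cdot 3^y$ by the Chebyshev-type bound $\pi(t) \le 2t/\log t$ for $t \ge 2$.
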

\begin{proof}
Let us consider the natural numbers of the form $mp$, where $p$ is a prime number satisfying $3^y < p
\leq x/m$ and $m \in \mathcal{U}(y)$. Thanks to Lemma~\ref{lem:mpinU}, we have that $mp \in
\mathcal{U}(x)$. Moreover, these numbers can be written in the form $mp$ in a unique way. Indeed, for the
sake of contradiction, suppose that $mp=m^\prime p^\prime$ for some $m^\prime, p^\prime$ satisfying the
same conditions as $m, p$, with $p \neq p^\prime$. Then, $p^\prime \mid m$, so that $3^y < p^\prime \leq
m \leq y$, which is impossible. At this point, counting the choices for $m$ and $p$, we get
\begin{equation*}
\#\mathcal{U}(x)
\geq \sum_{m \in \mathcal{U}(y)} \left(\pi\!\left(\frac{x}{m}\right) - \pi(3^y)\right)
\geq \sum_{m \in \mathcal{U}(y)} \pi\!\left(\frac{x}{m}\right) - \pi(3^y) y .
\end{equation*}
The desired claim follows by applying the inequality $\pi(3^y) y \leq 2\cdot 3^y$, which in turn follows
from the estimate $\pi(x)\leq 2x/\log x$ valid for all $x\geq 2$.
\end{proof}

We need the following technical lemma. %
Let $e_1:= 1$ and $e_{k+1} := e^{e_k}$ for all integers $k \geq 2$.

\begin{lem}\label{lem:logintegral}
For any fixed integer $k \geq 2$, we have
\begin{equation*}
\int_{e_{k}}^x \frac{1}{t \log t} \prod_{j=3}^{k} \log_j t \,\mathrm{d}t \sim \prod_{j=2}^{k} \log_j x ,
\end{equation*}
as $x \to +\infty$.
\end{lem}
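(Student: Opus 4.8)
The plan is to prove the asymptotic by differentiating: if we let $F(x)$ denote the left-hand integral and $G(x) := \prod_{j=2}^{k} \log_j x$ denote the conjectured right-hand side, then it suffices to show $F'(x) \sim G'(x)$ together with $F(x), G(x) \to +\infty$, after which L'H\^opital's rule (in the form: if $g \to \infty$ and $f'/g' \to 1$ then $f/g \to 1$) gives the claim. The derivative of the integrand is immediate from the fundamental theorem of calculus: $F'(x) = \frac{1}{x \log x} \prod_{j=3}^{k} \log_j x$. So the heart of the matter is computing $G'(x)$ and checking it matches.

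For $G'(x)$, I would use the product rule on $G(x) = \prod_{j=2}^{k}\log_j x$. Writing $\frac{d}{dx}\log_j x = \frac{1}{x \log x \log_2 x \cdots \log_{j-1} x}$ (a routine induction on $j$, since $\log_{j+1} x = \log(\log_j x)$ so its derivative is $(\log_j x)^{-1}$ times the derivative of $\log_j x$), the product rule gives
\begin{equation*}
G'(x) = \sum_{i=2}^{k} \Bigl(\prod_{\substack{j=2 \\ j \neq i}}^{k} \log_j x\Bigr) \cdot \frac{1}{x \log x \log_2 x \cdots \log_{i-1} x}.
\end{equation*}
In the $i$-th term the factors $\log_2 x, \dots, \log_{i-1} x$ in the product cancel against those in the denominator, leaving $\frac{1}{x\log x}\prod_{j=i+1}^{k}\log_j x \cdot \frac{1}{\text{(nothing, since }\log_i x\text{ was excluded)}}$ --- more precisely the $i$-th term equals $\frac{1}{x\log x}\cdot\frac{\prod_{j=2,\,j\neq i}^{k}\log_j x}{\log_2 x \cdots \log_{i-1} x} = \frac{1}{x\log x}\prod_{j=i+1}^{k}\log_j x$. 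The term $i=2$ contributes exactly $\frac{1}{x\log x}\prod_{j=3}^{k}\log_j x = F'(x)$, and every other term ($i \geq 3$) is smaller by a factor of at least $\log_i x \to \infty$ (indeed the $i$-th term is $F'(x)$ divided by $\log_3 x \cdots \log_i x$). Hence $G'(x) = F'(x)\bigl(1 + o(1)\bigr)$, i.e.\ $F'(x) \sim G'(x)$.

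Finally I would verify the hypotheses of the L'H\^opital criterion: $G(x) = \prod_{j=2}^k \log_j x \to +\infty$ as $x\to\infty$ since each iterated logarithm tends to infinity, and $F(x)\to+\infty$ because its integrand is positive and, for instance, $F'(t) \geq \frac{1}{t\log t}$ for $t$ large (the product $\prod_{j=3}^k \log_j t$ is eventually $\geq 1$), whose integral from $e_k$ diverges. The only mild subtlety is the lower limit $e_k$: one must check the integrand is defined and the iterated logarithms are positive throughout $[e_k, \infty)$, which holds by the choice $e_1 = 1$, $e_{k+1} = e^{e_k}$, guaranteeing $\log_j(e_k)$ is well-defined for $j \leq k$; this is exactly what the tower $e_k$ was designed for. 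I expect no real obstacle here --- the main (and only slightly fiddly) step is the bookkeeping in the product-rule cancellation showing that the $i=2$ term dominates; everything else is routine.
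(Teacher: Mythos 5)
Your proposal is correct and takes essentially the same approach as the paper: compute the derivative of $\prod_{j=2}^k \log_j x$, observe that the leading ($j=2$ or $i=2$) term equals the integrand while the remaining terms are smaller by factors of iterated logarithms, and conclude via L'H\^opital's rule. The paper organizes the derivative computation by logarithmic differentiation rather than the direct product rule, but this is a cosmetic difference and the resulting expansion and dominance argument are identical.
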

\begin{proof}
For $k=2$ the empty product appearing on the left hand side is set to $1$, by definition, and the
claim is clear in this case. Assume $k\geq 3$. We have
\begin{align*}
\left(\prod_{j=2}^{k} \log_j x\right)^\prime
 &= \left(\prod_{j=2}^{k} \log_j x\right) \sum_{j=2}^{k} \prod_{i = 0}^{j} \frac{1}{\log_i x} \\
 &= \frac{1}{x \log x} \left(\prod_{j=3}^{k} \log_j x\right) \left(1 + \sum_{j=3}^{k} \prod_{i = 3}^{j} \frac{1}{\log_j x}\right)
 \sim \frac{1}{x \log x} \left(\prod_{j=3}^{k} \log_j x\right) , \nonumber
\end{align*}
as $x \to +\infty$. The claim then follows from de~l'H\^{o}pital's rule.
\end{proof}

\begin{lem}\label{lem:Uxlower}
For every integer $k \geq 2$, we have
\begin{equation*}
\#\mathcal{U}(x) \gg_k \frac{x}{\log x} \prod_{j = 3}^{k} \log_j x ,
\end{equation*}
for all sufficiently large $x$, depending on $k$.
\end{lem}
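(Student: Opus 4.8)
The plan is to argue by induction on $k$, using Lemma~\ref{lem:recbound} to pass from level $k$ to level $k+1$. For the base case $k=2$ the asserted bound reads $\#\mathcal{U}(x) \gg x/\log x$ (the product over $j$ from $3$ to $2$ being empty, hence $1$), and this follows at once from Lemma~\ref{lem:primes} together with the Chebyshev lower bound $\pi(x) \gg x/\log x$, since $\mathcal{U}(x)$ contains all primes up to $x$.

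For the inductive step I would assume the bound at level $k$, i.e.\ fix constants $c_k>0$ and $x_k$ with $\#\mathcal{U}(t) \geq c_k \frac{t}{\log t}\prod_{j=3}^{k}\log_j t$ for all $t\geq x_k$. Given a large $x$, set $y := \lfloor \tfrac{\log x}{2\log 3}\rfloor$, so that $3^{y}\leq \sqrt{x}$ and in particular $x\geq 3^{y}$, and apply Lemma~\ref{lem:recbound}:
\[
\#\mathcal{U}(x) \;\geq\; \sum_{m\in\mathcal{U}(y)} \pi\!\left(\frac{x}{m}\right) \;-\; 2\cdot 3^{y} \;\geq\; \sum_{m\in\mathcal{U}(y)} \pi\!\left(\frac{x}{m}\right) \;-\; 2\sqrt{x}.
\]
Every $m$ in the sum satisfies $m\leq y\leq\log x$, so $x/m\geq x/\log x\to+\infty$ and $\log(x/m)\leq\log x$; hence Chebyshev gives $\pi(x/m)\gg \frac{x/m}{\log(x/m)}\geq \frac{x}{m\log x}$, uniformly in $m$. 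Therefore
\[
\#\mathcal{U}(x) \;\gg\; \frac{x}{\log x}\sum_{m\in\mathcal{U}(y)}\frac{1}{m} \;-\; 2\sqrt{x}.
\]

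It then remains to bound $\sum_{m\in\mathcal{U}(y)} 1/m$ from below, and here I would use partial summation: discarding the terms with $m$ below a suitable constant and applying the inductive hypothesis, one gets, for $y$ large,
\[
\sum_{m\in\mathcal{U}(y)}\frac{1}{m} \;\gg_k\; \int_{c}^{y} \frac{1}{t\log t}\prod_{j=3}^{k}\log_j t\,\mathrm{d}t \;\gg_k\; \prod_{j=2}^{k}\log_j y
\]
by Lemma~\ref{lem:logintegral} (replacing the lower limit $e_k$ there by $c$ changes the integral only by $O(1)$). Finally, since $y\asymp\log x$ one has $\log_j y\sim\log_{j+1}x$ for every $j\geq1$, whence $\prod_{j=2}^{k}\log_j y \sim \prod_{i=3}^{k+1}\log_i x$. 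Combining everything, $\#\mathcal{U}(x)\gg_k \frac{x}{\log x}\prod_{i=3}^{k+1}\log_i x - 2\sqrt{x}$, and since the first term dominates $\sqrt{x}$ this yields the level-$(k+1)$ bound and closes the induction.

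The only delicate point is the bookkeeping with iterated logarithms: the choice $y\asymp\log x$ must be small enough that the error term $2\cdot 3^{y}$ (and the truncation losses in the partial summation) are of strictly smaller order than the main term, yet large enough that each $\log_j y$ is comparable to $\log_{j+1}x$, so that invoking Lemma~\ref{lem:logintegral} with its shifted index produces exactly $\prod_{i=3}^{k+1}\log_i x$. I expect this index-shift verification to be the main—though still modest—obstacle; the rest is a direct assembly of Lemmas~\ref{lem:recbound}, \ref{lem:logintegral}, and~\ref{lem:primes} with Chebyshev's estimate.
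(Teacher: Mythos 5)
Your proposal is correct and follows essentially the same route as the paper's proof: induction on the number of iterated logarithms, with base case from Lemma~\ref{lem:primes} and Chebyshev, and inductive step via Lemma~\ref{lem:recbound} with $y\asymp\log x$, Chebyshev to get $\pi(x/m)\gg x/(m\log x)$, partial summation combined with the inductive hypothesis, and Lemma~\ref{lem:logintegral} followed by the shift $\log_j y\sim\log_{j+1}x$. The only cosmetic differences are the precise choice of $y$ (you take $y=\lfloor\log x/(2\log 3)\rfloor$ so that $3^y\le\sqrt{x}$, the paper takes $y=\tfrac12\log x$, for which $3^y=x^{(\log 3)/2}$ is still $o(x/\log x)$) and the re-indexing of the induction, neither of which affects the argument.
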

\begin{proof}
We proceed by induction on $k$. By Lemma~\ref{lem:primes} and by Chebyshev's estimate, we have
\begin{equation*}
\#\mathcal{U}(x) > \pi(x) \gg \frac{x}{\log x} ,
\end{equation*}
for all sufficiently large $x$. This proves the claim for $k = 2$. Suppose $k \geq 3$ and that we have
already proved the claim for $k - 1$. Put $y := \tfrac1{2}\log x$ and assume that $x$ is sufficiently large. By
Lemma~\ref{lem:recbound} and by Chebyshev's estimate, we have
\begin{equation}\label{equ:Uthebound1}
\#\mathcal{U}(x)
\geq \sum_{m \in \mathcal{U}(y)} \pi\!\left(\frac{x}{m}\right) - 2\cdot 3^y
\gg \sum_{\substack{m \in \mathcal{U}(y)\\ m\geq 2}} \frac{x}{m \log (x / m)}
> \frac{x}{\log x} \sum_{\substack{m \in \mathcal{U}(y)\\ m\geq 2}} \frac{1}{m} .
\end{equation}
Furthermore, by partial summation and by the induction hypothesis, we get
\begin{align}\label{equ:Uthebound2}
\sum_{\substack{m \in \mathcal{U}(y)\\ m\geq 2}} \frac{1}{m}
&\gg \int_{e_{k-1}}^y \frac{\#\mathcal{U}(t)}{t^2} \,\mathrm{d}t
\gg_k \int_{e_{k-1}}^y \frac{1}{t \log t} \prod_{j=3}^{k-1}\log_j t \, \mathrm{d} t\gg \prod_{j=2}^{k-1} \log_j y \gg \prod_{j=3}^{k} \log_j x ,
\end{align}
for all sufficiently large $x$, depending on $k$, where we employed Lemma~\ref{lem:logintegral}.
Putting together \eqref{equ:Uthebound1} and \eqref{equ:Uthebound2} we obtain the desired claim.
\end{proof}

Theorem~\ref{thm:lower} follows easily by Lemma~\ref{lem:doubling} and~\ref{lem:Uxlower}.

\bibliographystyle{amsplain}
%\bibliography{SignHarmSums018}

\end{document}